\newtheorem{theorem}{Theorem}[section]
\newtheorem{proposition}[theorem]{Proposition}
\newtheorem{coro}[theorem]{Corollary}
\newtheorem{lemma}[theorem]{Lemma}
\newtheorem{example}[theorem]{Example}
\newtheorem{definition}[theorem]{Definition}
\theoremstyle{remark} \newtheorem{remark}[theorem]{Remark}
\newcommand\N{\mathbb{N}}
\newcommand\R{\mathbb{R}}
\newcommand\Z{\mathbb{Z}}
\newcommand\F{\mathbb{F}}
\newcommand\M{\mathbb{M}}
\newcommand{\disappear}[1]
\begin{document}

\title[Quantum separation effect for Feynman-Kac semigroup]
{Probabilistic approach to quantum separation effect for Feynman-Kac semigroup}

\author{Adam Sikora}
\address{Adam Sikora, Department of Mathematics, Macquarie
University, NSW 2109, Australia}
\email{adam.sikora@mq.edu.au}

\author{Jacek Zienkiewicz}
\address{Jacek Zienkiewicz, Instytut Matematyczny, Uniwersytet Wroc\l awski,
  50-384 Wroc{\l}aw, pl. Grunwaldzki 2/4, Poland}
\email{zenek@math.uni.wroc.pl}

\subjclass{60J35, 35J10}
\keywords{Schr\"odinger operators, Brownian motion, Feynman-Kac semigroup, Quantum tunnelling and separation.}
\thanks{Adam Sikora was partially supported by an 
Australian Research Council (ARC) Discovery Grant  DP130101302.
Jacek Zienkiewicz  was partially supported by  NCN
grant   UMO-2014/15/B/ST1/00060  }
\date{\today}

\begin{abstract}
The quantum tunnelling  phenomenon allows a particle in Schr\"odinger mechanics to tunnel through a barrier that it classically could not overcome. Even infinite potentials do not always form impenetrable barriers. We
discuss an answer to the following question: What is a critical magnitude of potential, which creates an impenetrable barrier and   for which the corresponding Schr\"odinger evolution system separates?  In addition we describe some quantitative estimates for the separating effect in terms of cut-off potentials. 
\end{abstract}

\maketitle

{\it In memoriam of our high school mathematics teacher Augustyn Ka{\l}u\.za }

\section{introduction}

The main motivation for our study comes from the notion of the quantum tunnelling effect, a phenomenon which illuminates a striking difference between classical and quantum mechanics.
It allows a microscopic particle to pass through the classically forbidden potential barrier, even if its height is infinite.  It is easily predicted and explained by Schr\"odinger mechanics - the eigenstates of the Hamiltonian of the system cannot be localised. In this work we consider a quantum well and investigate the possibility that a particle
trapped in a well cannot escape, that is the possibility that the barrier separates two regions.
To be more precise, we consider the domain ${D}$ and its boundary $K=\partial D$ that separates ${D}$ and its complement 
${D}^c$. Then we fix the special  class of potentials $V$ singular near $K$ and consider 
the Hammiltonian of the system, that is the operator
$$
L_V=\Delta -V
$$
initially defined for a function belonging to $C_c^\infty(\R^d\setminus K)$. Here $\Delta$ is the positive standard Laplace operator.
Then we consider the Feynman-Kac semigroup $\exp(-t L_V)$ generated by  the  Hamiltonian
$L_V$ and we denote by $p^V_t(x,y)$ the corresponding heat kernel. 
We address the question: {\em when does $\exp(-t L_V)$ separate
${D}$ and ${D}^c$, that is when is $p^V_t(x,y)=0$ for all  $x\in {D}$ and $y\in {D}^c$?}  

When the domain ${D}$ has a  smooth boundary, the problem considered by us  has been satisfactory   resolved by Wu in \cite{Wu}. In our  work we generalise the results obtained by Wu. The essential difference compared to   \cite{Wu} is that we do not require smoothness
of the considered domain $D$. The domains we consider are irregular  fractals  of some special but still general type including the Koch 
snowflake domain.
In addition, we study quantitative estimates of the  tunnelling effect in our 
setting. Namely, we consider cut-off potentials and we estimate the rate at which they  suppress the semigroup kernel $p_t(x,y)$ when
$x,y$ are separated by the boundary $K=\partial D$, see Section \ref{sec5} below. 

 In order to deal with irregular domains, we  develop {{} a new} approach, different from one developed by Wu.  For the case of the separation problem, it is still  an elementary and simple
probabilistic argument based on {{} the} Paley-Zygmund  inequality and Blumenthal's zero-one  law.  It becomes more involved Brownian paths analysis for the quantitative description of the tunnelling. 

The assumptions we impose on the potential
$V$,  are optimal within the classes we consider.  The estimates which we {{} discuss in our note}  are strictly connected
to the boundary behaviour of the Brownian motion. We mention \cite{Bou_1}, \cite{Bou} 
as  papers studying diffusion in this direction.  Our motivation for the techniques we use partially comes from 
 the analysis in \cite{Bou_1, Bou}  and \cite{UZ}.

We would like to add that the questions concerning separation can be posed for any semigroup of operators, even
without direct relations to Schr\"odinger mechanics. We mention work \cite{ERSZ} 
where the authors study similar phenomena for certain types of divergence form elliptic operators.
The separation phenomenon for semigroups  is also related to regularity theory of the 
solutions of Partial Differential Equations which  was investigated   in \cite{FKS} and \cite{Tru2}. Interestingly in \cite{FKS} and \cite{Tru2} sufficient and often necessary conditions for 
the regularity of the system (which contradicts the separation) are  formulated in terms of integrability of the coefficients of the corresponding operators whereas we consider an assumption which can be essentially formulated in term of integrability of the potential~$V$.

\section{Preliminaries}\label{sec2}
	We adopt some notation from \cite[Chapter 1]{ito}. We will denote by $\Omega$ the space of all
continuous functions $X:\R_+^0\rightarrow \R^d$. 
We will denote by $X_t$
 its value at $t\in \R_+^0=[0,\infty)$.

Let  $\F$ be
the smallest $\sigma \text{-algebra}$ containing all
cylinders 
$$C_{t_1, t_2,\ldots,t_n, A_1,\ldots , A_1}=\{X\in \Omega : X_{t_1}\in A_1,\ldots ,  X_{t_n}\in A_n, 0\le t_1<t_2<\ldots<t_n\}$$ 
where $A_1,\ldots,A_n$
are Borel sets on $\R^d$ and $n$ is any integer. We define $\F_t$ as the  smallest $\sigma \text{-subalgebra}$ of $\F$
containing all cylinders $C_{t_1, t_2,\ldots,t_n, A_1,\ldots , A_1}$ with $t_n\le t$
 and we set
 \begin{equation}\label{fplus}
 \F_{t^+}=\bigcap_{s>t}\F_s\,.
 \end{equation}

An $\F\text{-measurable}$ function  $\tau:\Omega\rightarrow \R^+\cup\{\infty\}$ will be called a Markov time if $\{m<\tau\}\in \F_t$
for all $0\le t\le \infty$. For a Markov time $\tau$ we define $\F_{\tau^+}$ as the  $\sigma\text{-algebra}$ of all $A\in \F$
such that $A\cap \{m<\tau\}\in \F_t$. 
For given sample paths $X$ we define translation $\Theta_tX=Y$ putting $Y_s=X_{t+s}$. For Markov time $\tau$
we put $\Theta_\tau X=Y $ where $Y_s=X_{s+\tau(X)}$. 

Let  $\Omega_x=\{X\in \Omega: X(0)=x\}$. Denote by $P_x(dX)$ the  Wiener measure on $\Omega_x$  with the density transition corresponding to $\Delta$, see for example \cite[Section 1.4]{ito}. Note that we do not utilize  common probabilistic convention to use the operator $\Delta/2$ as the semigroup generator. Here, we rather use the standard Laplacian because we believe it is a more natural and convenient  definition in the context of Schr\"odinger semigroups studied in our note.
We will call $(\Omega_x, \F, P_x)$ the Brownian motion starting at $x$. 
For  basic properties of the Brownian motion we refer readers to \cite{BG,  ito, KS}.
Next, for  $\sigma\text{-subalgebra }\M \text{ of }\F$ we denote by  $\operatorname{E}_x(f|\M)$ the conditional expectation of $f$ on the probability space
$(\Omega_x, \F, P_x)$.


We will use the following version of classical strong Markov property of the Brownian motion $(\Omega_x, \F, P_x)$:
For $A\in \F_{\tau^+}$ and $B\in \F$ we have
\begin{equation}\label{SMP}
P_x\{X:X\in A\wedge\Theta_\tau X\in  B\wedge X\in \{\tau<\infty\}\}=\int_A P_{X_\tau}(B)P_x(dX)
\end{equation}
We can extend  the above formula to any cylinder 
$$B=C_{\eta_1(X), \eta_1(X)+\eta_2(X),\ldots, \eta_1(X)+\ldots+\eta_n(X) , A_1,\ldots A_n}$$
where $\eta_1,\ldots, \eta_n$ are nonegative $\F_{\tau^+}$ measurable functions. 
This extended form can be easily obtained from \eqref{SMP} for simple functions $\eta_1,\cdots,\eta_n$
and general case by passing to the limit. We refer the reader to \cite[p. 23, 5b)]{ito} see also \cite[p. 169, (8.69)]{Went}.

In the sequel we will use the following observation.
Consider any set $A\in \F_{\tau^+}$.  For a bounded continuous function $ f \colon 
[0,\infty) \times \R^n \to \R$ the expression
$$
\int_A f(\tau, X_\tau )P_x(dX)
$$
defines positive linear functional. So for certain nonegative finite Borel measure $d\mu_A(\tau, w)$ we have
\begin{equation}\label{tau}
\int_A f(\tau, X_\tau)P_x(dX)=\int  f(\tau, w)d\mu_A(\tau, w)
\end{equation}
In what follow it is always clear which set $A$ we consider so we skip the subscript $A$ in $\mu_A$.

Let  ${D}\subset \R^d$ be open, ${D}^c=\R^d\setminus {D}$ be its complement.
Let $V\ge 0$ be locally bounded on ${D}\cup \mbox{int}{({D}^c)}\subset \R^d$.
Following  \cite{CZ}, we define the Feynman-Kac functional by the formula
\begin{equation}\label{eV}
e_V(t)=\exp(-A_V(t))
\end{equation}
 where 
$$
A_V(t)=\int_0^tV(X_s)ds.
$$
Then the one parameter family of operators $\{T_t, t\ge 0\}$ 
 \begin{equation}\label{FK}
T_tf(x)=\operatorname{E}_{x}\{e_V(t)f(X_t)\},
\end{equation}
where by $\operatorname{E}_{x}$ we denote expected value over the Brownian motion starting at point $x\in \R^d$, 
is called Feynman-Kac semigroup, see \cite[(26) p. 76]{CZ}.    It is well known that 
the operators $\{T_t, t\ge 0\}$ form the one parameter strongly continuous symmetric semigroup
of contractions on $L^p(\R^d)$ for all  $1\le p <\infty$. Moreover
$C_c^\infty ({D}\cup  \mbox{int}{({D}^c)})$
is contained in the domain of its infinitesimal generator $-L_V$, and  for all functions     
$\phi \in C_c^\infty ({D}\cup  \mbox{int}{({D}^c)})$ we have
$$L_V\phi(x)=(\Delta -V(x))\phi(x).$$

We say that 
the semigroup  $\{T_t, t\ge 0\}$ separates the sets ${D}$ and ${D^c}$  if all operators $T_t$ preserve the subspace $L^2({D})\subset L^2(\R^d)$,  that is 
 \begin{equation}\label{omega}
T_t(L^2({D}))\subset L^2({D}).
\end{equation}
Note that the operators $\{T_t, t\ge 0\}$ are symmetric  
so  \eqref{omega} implies that 
the subspace $L^2({D}^c)$ is also preserved. 

\medskip

In a part of our discussion in what follows it is convenient to use 
a different  version of Feynmann-Kac formula
based on the notion of the Brownian bridge. Let $\{Y_s, t \ge s\ge 0 \}$ be the Brownian bridge stochastic process connecting points $x,y \in \R^d$ for a definition, see for example  \cite[Example 3, p. 243]{Pro}.  
Using the above notion  the heat kernel corresponding to the Feynmann-Kac semigroup can be written as 
\begin{eqnarray*}
p^{V}_t(x,y)=\int \exp\left(-\int_0^t V(Y_s) ds\right) d\mu_{x,y}(Y{}),
\end{eqnarray*}
where $d\mu_{x,y}$ is  the Brownian bridge  measure defined on the set { $\Omega_{x,y}^t$} of continuous sample paths connecting $x$ and $y$,  normalised in 
such way  that $\int d\mu_{x,y}(\omega)=p_t(x,y)$, see  \cite[Theorem 6.6]{Sim}.
Note that  in our notation $\frac{\mu_{x,y}(\Omega_{x,y}^t)}{p_t(x,y)}$ is the Brownian bridge
probability measure on the set {{} $\Omega_{x,y}^t$}. We will call $d\mu_{x,y}$ the Brownian bridge measure.

\medskip

In our approach the Paley-Zygmund  inequality plays a crucial role. It 
bounds the probability that a positive random variable is small, in terms of its mean and variance.
Let us recall the statement of this result.  

\begin{proposition} 
Suppose that  $Z \ge 0$ is a positive random variable with finite variance and that $0< \theta <1$. 
 Then
$$
    P \big( Z \geq \theta\, \operatorname{E}(Z) \big) \geq (1-\theta)^2\, \frac{\operatorname{E}(Z)^2}{\operatorname{E}(Z^2)}.
    $$
\end{proposition}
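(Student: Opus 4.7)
The plan is to prove the Paley-Zygmund inequality via the classical two-step argument: first extract a lower bound for the truncated expectation $E(Z \mathbf{1}_A)$ on the favourable event $A = \{Z \geq \theta E(Z)\}$, and then convert that lower bound into a lower bound on $P(A)$ by applying the Cauchy--Schwarz inequality.

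For the first step, I would split the expectation as
\[
    E(Z) = E\bigl(Z\mathbf{1}_{\{Z < \theta E(Z)\}}\bigr) + E\bigl(Z\mathbf{1}_{\{Z \geq \theta E(Z)\}}\bigr).
\]
Since $Z \geq 0$, the first summand is at most $\theta E(Z)$ (bounding $Z$ by $\theta E(Z)$ pointwise on that event). Subtracting gives
\[
    E\bigl(Z\mathbf{1}_{\{Z \geq \theta E(Z)\}}\bigr) \geq (1-\theta)\, E(Z).
\]
This inequality is the entire content of the first half and is the key quantitative use of the threshold $\theta$.

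For the second step, I would apply the Cauchy--Schwarz inequality to the product $Z \cdot \mathbf{1}_A$, writing
\[
    E\bigl(Z\mathbf{1}_A\bigr)^2 \leq E(Z^2)\, E(\mathbf{1}_A^2) = E(Z^2)\, P(A).
\]
Combining this with the lower bound from the first step yields
\[
    (1-\theta)^2 E(Z)^2 \leq E(Z^2)\, P\bigl(Z \geq \theta E(Z)\bigr),
\]
which, upon dividing by $E(Z^2)$ (nonzero unless $Z \equiv 0$, in which case the statement is trivially true with the convention $0/0 = 0$), gives exactly the asserted inequality.

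There is no real obstacle here; the statement is a classical one-line consequence of Cauchy--Schwarz applied to a suitable truncation. The only point worth noting is the degenerate case $E(Z^2) = 0$, which forces $Z = 0$ almost surely and makes both sides of the inequality equal to zero, so no separate argument is needed.
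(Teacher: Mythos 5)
Your proof is correct and follows essentially the same argument as the paper: the same decomposition of $\operatorname{E}(Z)$ at the threshold $\theta \operatorname{E}(Z)$, with the Cauchy--Schwarz inequality applied to the contribution from the event $\{Z \geq \theta \operatorname{E}(Z)\}$. The only difference is cosmetic (you subtract before applying Cauchy--Schwarz rather than after), and your remark on the degenerate case $\operatorname{E}(Z^2)=0$ is a harmless addition.
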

\begin{proof} {{} Note that}
$$
    \operatorname{E} (Z) = \operatorname{E} (Z \, \chi_{Z < \theta \operatorname{E}(Z)} ) + \operatorname{E} (Z \, \chi_{Z \geq \theta \operatorname{E}(Z)} ).
$$
Obviously, the first addend is at most $\theta \operatorname{E}(Z)$. By 
the Cauchy-Schwarz inequality the second one is at most
$$
     \operatorname{E} (Z^2)^{1/2}  \operatorname{E}( \chi_{Z \geq \theta \operatorname{E} (Z)})^{1/2} =
     \operatorname{E} (Z^2)^{1/2} P \big(Z \geq \theta\, \operatorname{E}(Z) \big)^{1/2}. 
$$
This proves the required estimate. 
\end{proof}

\section{ Quantum separation for  Feynman-Kac semigroups }
Consider  a closed subset $K \subset \R^d$ with dimension $d\ge 2$. We will impose   fractal like type regularity requirements for $K$. Namely 
for any $\gamma>0$ we define the $\gamma$ neighbourhood   $K_\gamma$ of $K$ by the formula
\begin{equation}\label{K}
K_\gamma=\{x\in \R^d \colon \inf_{y\in K}|x-y| \le \gamma \}.
\end{equation}
In what follows we will always assume that there exist an exponent $0<\alpha<d$ 
and  a positive constant $C_1$ such that 
\begin{equation}\label{vol1}
|K_\gamma\cap B(x,r) | \le C_1  r^\alpha \gamma^{d-\alpha}
\end{equation}
for all $x\in \R^d$ and $1 \ge r \ge \gamma >0$. We also assume that
there exists a positive constant $C_2$ such that 
\begin{equation}\label{vol2}
|K_\gamma\cap B(x,r) | \ge C_2  r^\alpha \gamma^{d-\alpha} 
\end{equation}
for all $x \in K_{r/2}$ and all  $ 1 \ge r \ge \gamma >0$.

The above regularity conditions are frequently considered in the literature and  motivated by the  notion of Minkowski dimension (which is also called box dimension), see 
for example \S 3.1 and Proposition 3.2 of \cite{Fal}.
These conditions  are {{} also} closely  related to the notion of Ahlfors regularity, which is
often used in the context of analysis on metric spaces, see for example \cite{DaS} and  \cite{Hei}.
Using the standard techniques, one can check that these conditions are satisfied for most of the standard fractal constructions including the classical van Koch snowflake curve, 
 again see for example \cite{Fal}.
If the boundary of the region ${D}$ is regular enough, for example 
if  $K=\partial D$ is an immersed  $C^1$  manifold  then {{} it is immediate that} the estimates \eqref{vol1} and \eqref{vol2} hold with 
$\alpha$ equals to topological dimension of $K$. 

We define the distance from $K$ by the formula $d_K(x)=\inf\{d(x,y)\colon y\in K\}$
and then we set 
\begin{equation}\label{beta}
V_\beta= C_Vd_K^{-\beta} .
\end{equation}
The precise value of the constant is irrelevant for our analysis, so in what follows we fix $C_V=1$.

 Before we state our first  result, Theorem~\ref{multiD} below,  we would like
	make the following observation which  explains  why we consider only the range of exponents $\alpha > d-2$ in what follows. 

\begin{remark}
	It is not difficult to show that if {$X$} is the Brownian motion in the Euclidean space $\R^d$  starting at the origin then 
	$$
	P_0\left(\int_0^\delta |X_s|^{-2}  ds = \infty, \, \forall\delta>0\right)=1.
	$$
	Hence there is no point to study the case $\beta \ge 2$ and  we can assume that $\alpha > d-2$ in the following statement.
\end{remark}


\begin{theorem}\label{multiD}
Assume that a closed subset $K \subset \R^d$, for some $d\ge 2$ satisfies conditions \eqref{vol1} and \eqref{vol2}
with some $d>\alpha > d-2$. Suppose next that {$X$} is the Brownian motion starting at point $x$  contained in  $K$ that is  such that $X_0=x\in K$. Then 
$$
P_x\left(\int_0^\delta V_\beta(X_s)ds = \infty, \, \forall\delta>0\right)=1
$$
for every $\beta$ such that  $\beta+\alpha \ge d$. 
\end{theorem}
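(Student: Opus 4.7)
The plan is to combine the Paley--Zygmund inequality of the preceding proposition with Blumenthal's zero-one law. Writing $A(\delta) := \int_0^\delta V_\beta(X_s)\, ds$, which is non-decreasing in $\delta$, the event in the statement equals $\bigcap_n \{A(1/n)=\infty\}$ and therefore lies in the germ $\sigma$-field $\mathcal{F}_{0+}$; by Blumenthal's zero-one law its probability is $0$ or $1$. By continuity of measure along the decreasing family $\{A(1/n)=\infty\}$, it suffices to prove that $P^x(A(\delta)=\infty) \geq c > 0$ for every fixed $\delta > 0$.

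To get this, I introduce the bounded truncations $V^{(N)}(y) := V_\beta(y)\,\mathbf{1}_{\{d_K(y) \geq 2^{-N}\}}$ and $Z_N := \int_0^\delta V^{(N)}(X_s)\, ds$, so that $Z_N \nearrow A(\delta)$ monotonically. With $G_\delta(x,y) := \int_0^\delta p_s(x,y)\, ds$, Fubini and the Markov property give
\[
E^x[Z_N] = \int G_\delta(x,y)\, V^{(N)}(y)\, dy, \qquad E^x[Z_N^2] \leq 2 \int\!\!\int V^{(N)}(y)\, V^{(N)}(z)\, G_\delta(x,y)\, G_\delta(y,z)\, dy\, dz.
\]
The strategy is to prove (i) $E^x[Z_N] \to \infty$ as $N\to\infty$, and (ii) $E^x[Z_N^2] \leq C\,(E^x[Z_N])^2$ uniformly in $N$. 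The Paley--Zygmund inequality with $\theta = 1/2$ then gives $P^x(Z_N \geq \tfrac12 E^x[Z_N]) \geq 1/(4C) > 0$, so $P^x(A(\delta) \geq M) \geq 1/(4C)$ for every $M$, yielding $P^x(A(\delta)=\infty) \geq 1/(4C)$ as required.

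For (i), I substitute the Gaussian lower bound $p_s(x,y) \geq c\, s^{-d/2}$ (valid for $|x-y| \leq \sqrt s$) and decompose $B(x,\sqrt s)\cap\{d_K \geq 2^{-N}\}$ into dyadic shells around $K$. Since $x\in K\subset K_{r/2}$ for every $r>0$, the lower volume bound \eqref{vol2} gives $|B(x,\sqrt s)\cap K_{2^{-k}}| \geq c\, s^{\alpha/2}\, 2^{-k(d-\alpha)}$, and grouping the dyadic levels coarsely enough that the contribution of $K_{2^{-k-1}}$ is dominated produces a shell-by-shell lower bound of the same order. Summing $\sum_k 2^{k(\beta+\alpha-d)}$ over $\tfrac12\log_2(1/s) \leq k \leq N$ gives a factor comparable to $2^{N(\beta+\alpha-d)}$ when $\beta+\alpha > d$ and to $N$ when $\beta+\alpha = d$. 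The remaining time integral $\int_0^\delta s^{-(d-\alpha)/2}\, ds$ is finite precisely because $\alpha > d-2$; in either case $E^x[Z_N]\to\infty$.

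For (ii), an analogous dyadic calculation based on the Gaussian upper bound on $p_s$ and the upper volume bound \eqref{vol1} yields the key estimate $\int G_\delta(y,z)\, V^{(N)}(z)\, dz \leq C\, E^x[Z_N]$ uniformly for $y$ in the support of $V^{(N)}$; substituting this into the double integral gives (ii). I expect the chief technical obstacle to be this uniform comparison of the Green potential $E^y[Z_N]$ with $E^x[Z_N]$: the contribution from the region $|y-z|\lesssim d_K(y)$ produces a local term of order $d_K(y)^{2-\beta}$, and controlling it by $E^x[Z_N]$ turns out to be equivalent to $\beta-2 \leq \beta+\alpha-d$, i.e.\ $\alpha\geq d-2$. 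Both volume hypotheses \eqref{vol1} and \eqref{vol2} are essential, and the strict inequality $\alpha > d-2$ is used here for the second time to guarantee that all the space-time integrals converge.
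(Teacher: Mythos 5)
Your proposal is correct in its main lines and takes a genuinely different route from the paper's proof, while resting on the same three pillars (Gaussian kernel bounds, the volume conditions \eqref{vol1}--\eqref{vol2}, Paley--Zygmund plus Blumenthal). The paper applies Paley--Zygmund separately to the \emph{unweighted} occupation times of the thin shells $K'_n=K_{a^n}\setminus K_{a^{n+1}}$, for which first and second moments are comparable almost for free (estimates \eqref{aaa}, \eqref{bbb}), and then converts ``$Z_n\ge cb_n$ for infinitely many $n$'' into divergence of $\int_0^{\delta^2}V_\beta(X_s)ds$ via a reverse Fatou/Borel--Cantelli step before invoking the zero--one law. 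You instead apply Paley--Zygmund once to the truncated weighted functional $Z_N=\int_0^\delta V_\beta\mathbf{1}_{\{d_K\ge 2^{-N}\}}(X_s)\,ds$ and let $N\to\infty$; this removes the ``infinitely often'' step, but shifts the work into the harder estimate $\sup_y\int\Gamma_\delta(y,z)V^{(N)}(z)\,dz\le C\,\operatorname{E}^x(Z_N)$, i.e.\ a uniform comparison of the Green potential of $V^{(N)}$ at an arbitrary point of its support with its value at $x\in K$. Your dyadic computation of that comparison, including the local term $d_K(y)^{2-\beta}$ and the role of $\alpha>d-2$ (equivalently $\beta<2$ in the critical case), is the right one and does go through; the coarse grouping of dyadic levels you mention is exactly the paper's choice of $a$ in \eqref{vol3}.

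The one genuine soft spot is the reduction at the start. The event $\Omega_V=\{\int_0^\delta V(X_s)ds=\infty\ \forall\delta\}$ is the decreasing intersection of $\{A(1/n)=\infty\}$, so continuity of measure gives $P(\Omega_V)=\lim_n P(A(1/n)=\infty)$; this is only useful if the lower bound $P(A(\delta)=\infty)\ge c$ holds with a constant $c$ \emph{independent of} $\delta$. A bound $c=c_\delta>0$ that may degenerate as $\delta\to0$ proves nothing: divergence of $A(\delta)$ can be caused by excursions hitting $K$ at positive times, which contribute to $P(A(\delta)=\infty)$ but not to $\Omega_V$ (this is why the paper stresses that $\sigma$ in \eqref{PZest} does not depend on $\delta$). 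In your write-up the constant $C$ in $\operatorname{E}^x(Z_N^2)\le C(\operatorname{E}^x(Z_N))^2$ is only claimed uniform in $N$, so as stated the Blumenthal step is not justified. The gap is fixable within your scheme: if you track the $\delta$-dependence, both $\operatorname{E}^x(Z_N)$ and $\sup_y\int\Gamma_\delta(y,z)V^{(N)}(z)dz$ are of order $\delta^{(2-d+\alpha)/2}\sum_k 2^{k(\beta+\alpha-d)}$ (over the relevant range of $k$), with the local term dominated by this quantity precisely when $\alpha\ge d-2$, so $C$ can indeed be taken independent of $\delta$; alternatively, Brownian scaling reduces matters to $\delta=1$ with constants depending only on $C_1,C_2,\alpha,\beta,d$. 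Either way, this uniformity must be stated and checked, since it is what makes the zero--one law applicable.
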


\begin{proof} Note that by taking intersection of $K$ with a closed ball $\overline {B(x,1)}$
we can assume without loss of generality that the set $K$ is compact. 
Next note that it follows from \eqref{vol1} and \eqref{vol2}
 that if one takes  $1> a >0$ such that $C_2/2 \ge a^{d-\alpha}C_1$ then for $C_3=C_2/2$ 
\begin{equation}\label{vol3}
|(K_\gamma \setminus K_{a \gamma})\cap B(x,r) | \ge C_3 r^\alpha \gamma^{d-\alpha} \quad \forall_{x \in K_{r/2}}\,.
\end{equation}
Now for any $n\in \N$ we set 
$$
K'_n= K_{a^n} \setminus K_{a^{n+1}},
$$
where $a$ is the constant from estimate \eqref{vol3}. 
 We define a sequence of random variables~$Z_n$ by  the following formula. 
$$
Z_n(X)=\int_0^{\delta^2}\chi_{K'_n}(X_s)ds
$$
where $\chi_{K'_n}$ is the characteristic function of the set $K'_n$ described above
and {$X$} is the Brownian motion process starting at some fixed point $x\in K$.

Following the idea of \cite{UZ},  
we shall verify assumptions of the Paley-Zygmund inequality for each random variable~$Z_n$.  To that end take some $\delta>0$ and 
set  $$b_n = a^{n(d-\alpha)}\delta^{2-d+\alpha}.$$ We shall prove the following estimates for the expected values of  $Z_n$ and $Z_n^2$
\begin{eqnarray}\label{aaa}
\operatorname{E}_x(Z_n)  \ge C   b_n   
\end{eqnarray}
and 
\begin{eqnarray}\label{bbb}
\operatorname{E}_x(Z_n^2)  \le c  b_n^2
\end{eqnarray}
valid for all $n\in \N$ such that $a^n \le \delta^2$. The constants $c,C$ in \eqref{aaa} and \eqref{bbb} do 
not depend on $n$. 
In order to prove \eqref{aaa} we note that if $d>2$ then for positive constants $C, C', c',c >0$  
   \begin{equation}\label{e8}     
C' r^{2-d}\exp\left(-\frac{c'r^2}{\delta^2}\right)  \le \int_0^{\delta^2}{t^{-d/2}}\exp\left(\frac{-r^2}{4t}\right)dt     \le  Cr^{2-d}\exp\left(-\frac{cr^2}{\delta^2}\right).     \end{equation}
Whereas for $d=2$ 
   \begin{eqnarray}\label{2d}     
C'(1+|\log( \delta /r)|) \exp\left(-\frac{c'r^2}{\delta^2}\right)  \le \int_0^{\delta^2}{t^{-d/2}}\exp\left(\frac{-r^2}{4t}\right)dt    \\ \le  C(1+|\log(\delta /r)|)\exp\left(-\frac{cr^2}{\delta^2}\right). 
\nonumber    \end{eqnarray}

We first discuss the case  $d>2$.  Since $p_t(x,y)=  (4\pi t)^{-d/2}e^{-\frac{|x-y|^2}{4t}}$ is the transition density of the considered process $X$, using Fubini's theorem, we 
have  
\begin{eqnarray*}
\operatorname{E}_x(Z_n)&=&  \int \int_0^{\delta^2} \chi_{K'_n}(X_t)  dtP_x(dX)\\   &=&
\int_0^{\delta^2}\int_{\R^d} \chi_{K'_n}(y) {(4t\pi)^{-d/2}}\exp\left({-\frac{|x-y|^2}{4t}}\right) dy dt\\
&=&\int_{\R^d} \int_0^{\delta^2} \chi_{K'_n}(y) {(4t\pi)^{-d/2}}\exp\left({-\frac{|x-y|^2}{4t}}\right) dt dy,
\end{eqnarray*}
By \eqref{e8} and then by \eqref{vol3}
 \begin{eqnarray*}
\operatorname{E}_x(Z_n)&=&\int_{\R^d} \int_0^{\delta^2} \chi_{K'_n}(y) {(4t\pi)^{-d/2}}\exp\left({-\frac{|x-y|^2}{4t}}\right) dt dy
\\ &\ge& C  \int_{|x-y| \le \delta} \chi_{K'_n}(y) |x-y|^{2-d} dy
\ge c \delta^{(2-d)}
\int_{K'_n\cap B(x,\delta)} dy
\\ &\ge& c C_3  \delta^{(2-d)}a^{n(d-\alpha)} \delta^\alpha
= c' b_n.
\end{eqnarray*}  
This proves estimate \eqref{aaa}.

\medskip

In the next step of the proof  we will verify estimate \eqref{bbb}. 
\begin{eqnarray*}
Z_n^2(X)&=&\int_0^{\delta^2}\int_0^{\delta^2} \chi_{K'_n}(X_s) \chi_{K'_n}(X_t)ds  dt\\
&=& \int_{s\le t  \le \delta^2} \chi_{K'_n}(X_s) \chi_{K'_n}(X_t)ds  dt
+\int_{t\le s  \le \delta^2} \chi_{K'_n}(X_s) \chi_{K'_n}(X_t)ds  dt.
\end{eqnarray*}
Now for $t>s$ it follows from the independence of $X_s$ and $X_t-X_s$ 
\begin{eqnarray*}
&&\int \int_{s\le t  \le \delta^2} \chi_{K'_n}(X_s) \chi_{K'_n}(X_t)ds  dtP_x(dX)\\ &=&\int\int_{s\le t  \le \delta^2}
\chi_{K'_n} (X_s) \chi_{K'_n}(X_t-X_s+X_s)
 dsdt P_x(dX)
 \\&=&
\int_0^{\delta^2} \int_0^{t}\int\int \frac{\chi_{K'_n}(y)\chi_{K'_n}(z+y)}{(4\pi)^d (s(t-s))^{d/2}}\exp\left({-\frac{|x-y|^2}{4s}}\right)\exp\left({-\frac{|z|^2}{4(t-s)}}\right)  dz dy dsdt.
 \end{eqnarray*}
 Next, by \eqref{e8}
 \begin{eqnarray}\nonumber
\int_0^{\delta^2} \int_0^{t}\int\int \frac{\chi_{K'_n}(y)\chi_{K'_n}(z+y)}{(4\pi)^d (s(t-s))^{d/2}}\exp\left({-\frac{|x-y|^2}{4s}}\right)\exp\left({-\frac{|z|^2}{4(t-s)}}\right)  dz dy dsdt \\  \nonumber \le 
C \int\int\frac{\chi_{K'_n}(y)\chi_{K'_n}(z+y)}  {|x-y|^{d-2}|z|^{d-2}}\exp\left(-\frac{c|x-y|^2}{\delta^2}\right)\exp\left(-\frac{c|z|^2}{\delta^2}\right) dz dy=:I.
 \end{eqnarray}
To estimate the term $I$ we recall from  the beginning of  proof that without loss of generality we can assume that $K \subset  {B(x,1)}$. For any $m \in \N$ set 
$$A_m=\{ z \in \R^d \colon \, a^{m+1} < |z|  \le a^{m}\}.$$ Then by \eqref{vol1}
 \begin{eqnarray}\label{l14}
&& \int\chi_{K'_n}(z+y)  \nonumber
 |z|^{2-d}\exp\left(-\frac{c|z|^2}{\delta^2}\right) dz \\ 
& =&\sum_{m\in \N} \int_{A_m}\chi_{K'_n}(z+y)  \nonumber
 |z|^{2-d}\exp\left(-\frac{c|z|^2}{\delta^2}\right) dz\\
& \le &
 C\sum_{a^m < \delta} a^{m(2-d)} a^{m\alpha}a^{n(d-\alpha)}
 + C\sum_{a^m \ge \delta} a^{m(2-d)} a^{m\alpha}a^{n(d-\alpha)}\exp\left(-\frac{ca^{2m}}{\delta^2}\right)
 \\
& \le &
 Ca^{n(d-\alpha)}\delta^{2-d+\alpha}+
 Ca^{n(d-\alpha)}\delta^{2-d+\alpha} \sum_{a^m \ge \delta}\left(\frac{a^m}{\delta}\right)^{2-d+\alpha}\exp\left(-\frac{ca^{2m}}{\delta^2}\right)
 \nonumber
 \\
 &\le &
  Ca^{n(d-\alpha)}\delta^{2-d+\alpha}=Cb_n.
  \nonumber
\end{eqnarray}
By the above estimate 
\begin{eqnarray*}
 \int\int\chi_{K'_n}(y)\chi_{K'_n}(z+y)  |x-y|^{2-d}\exp\left(-c\frac{|x-y|^2}{\delta^2}\right)
 |z|^{2-d}\exp\left(-c\frac{|z|^2}{\delta^2}\right) dz dy\\ \le 
 Ca^{n(d-\alpha)}\delta^{2-d+\alpha}\int\chi_{K'_n}(y) |x-y|^{2-d}\exp\left(-c\frac{|x-y|^2}{\delta^2}\right) dy. 
\end{eqnarray*}
Now the repetition of the calculation  in \eqref{l14} applied to the last integral above, that is 
$ \int\chi_{K'_n}(y) |x-y|^{2-d}\exp\left(-c\frac{|x-y|^2}{\delta^2}\right) dy  $ yields  the required estimate 
$$I \le C b_n^2=C\left( a^{n(d-\alpha)}\delta^{2-d+\alpha}\right)^2.$$
This proves estimate \eqref{bbb}.

\medskip
 
Next,  by the Paley-Zygmund  inequality with $\theta =1/2$
it follows from  estimates \eqref{aaa} and \eqref{bbb} that
 there exists a constant $\sigma>0$ independent of $n$ and $\delta$  such that for an appropriate constant $c$
\begin{eqnarray}\label{PZest}
P_x\Big(Z_n \ge  c a^{n(d-\alpha)}\delta^{2-d+\alpha}=cb_n  \Big) \ge P_x\left( Z_n \ge \frac{\operatorname{E_x}(Z_n)}{2}\right) \ge\frac{\operatorname{E_x}(Z)^2}{4\operatorname{E_x}(Z^2)} \ge \sigma  .
\end{eqnarray}
Hence for any $\delta>0$.
\begin{eqnarray}\nonumber 
P_x\left( \int_0^{\delta^2}\chi_{K'_n}(X_s)ds \ge   c a^{n(d-\alpha)}\delta^{2-d+\alpha} \quad 
\mbox{for infinitely many $n$}  \right) \\=
	P_x\left( \bigcap_{N=1}^\infty \bigcup_{n=N}^\infty\left\{\int_0^{\delta^2}\chi_{K'_n}(X_s)ds \ge   c a^{n(d-\alpha)}\delta^{2-d+\alpha} \right\}  \right) 
	\ge \sigma.\label{12}
\end{eqnarray}

Now  consider the additive  functional $A_{V_\beta}$ with 
$V_\beta= d_K^{-\beta}$ for some $\beta \ge d-\alpha$. Then, by \eqref{PZest} and  \eqref{12}, for any sequence $\delta_j$ decreasing 
to $0$ we have
\begin{eqnarray*}
 &&\hspace{-1cm}P_x\left(A_{V_\beta}(\delta^2)=\int_0^{\delta^2}V_\beta(X_s)ds = \infty, \quad \forall \delta>0\right)
\\&=&P_x\left( \bigcap_j \left\{\int_0^{\delta_j^2}V_\beta(X_s)ds = \infty \right\}\right)
=\lim_{j\to \infty}P_x\left( \int_0^{\delta_j^2}V_\beta(X_s)ds = \infty \right) \\
&\ge& \lim_{j\to \infty}P_x\left( \int_0^{\delta_j^2}\chi_{K'_n}(X_s)ds \ge   c a^{n(d-\alpha)}\delta_j^{2-d+\alpha} \quad \mbox{for infinitely many $n$}  \right) \ge 
\sigma.
\end{eqnarray*}
  Before we continue our discussion we shall justify the first of the above inequalities in more detailed way. Assume we are given a sample path   $X \in \Omega_x$ with 
$$
a^{n(\alpha-d)}\int_0^{\delta_j^2}\chi_{K'_n}(X_s)ds \ge c \delta_j^{2-d+\alpha}
$$
for all $n \in A_X $ and some infinite subset $ A_X \subset \N$.
Then since $a^{n(\alpha-d)}{V_\beta}(x) \ge 1$ for all $x\in K'_n$
it follows that 
$$
\int_0^{\delta_j^2}V_\beta(X_s)ds=
\sum_n \int_0^{\delta_j^2}V_\beta(X_s)\chi_{K'_n}(X_s)ds
\ge c\sum_{n \in A_X} \delta_j^2= \infty. 
$$
Hence the first inequality follows.

Now the event $$\Omega_{V_\beta}=\left\{X: \int_0^{\delta^2}V_\beta(X_s)ds = \infty \colon \quad \forall \delta>0\right\}$$ is measurable with respect 
to $\sigma$-field  ${\F_{0^+}}$ defined by \eqref{fplus} so by Blumenthal's zero-one law,   see for example \cite{Blum} or \cite[p. 25, Problem 2]{ito},  it must have  probability equal to $0$ or $1$. 
Thus $P(\Omega_{V_\beta})=1$. This ends the proof of Theorem~\ref{multiD} in the case $d>2$. For $d=2$ the proof is a simple modification of the above argument. One can essentially repeat the same calculation just replacing estimates \eqref{e8} by estimates \eqref{2d} corresponding to the case $d=2$. We skip the details here. 
\end{proof}

We are now in position to state the main result of this section. 
\begin{theorem}\label{sep}
Suppose that the set ${D} \subset \R^d$ is 
open simply connected  and that its boundary  $K=\partial {D}$ satisfies conditions \eqref{vol1} and \eqref{vol2} for some 
$d>\alpha >0$ and $d\ge 2$.
Let  $\{T_t, t\ge 0\}$ be the Feynman-Kac semigroup generated by  $L_{V_\beta}=\Delta -V_\beta$, where the potential 
$V_\beta$ is defined by \eqref{beta}.  Assume also that  $\alpha+\beta \ge d$.

Then the subspace $L^2({D})$ of $L^2(\R^d)$ is  invariant under the action $T_t$
that is 
$$
T_t(L^2({D}))\subset L^2({D})
$$
for all $t\ge 0$. 
\end{theorem}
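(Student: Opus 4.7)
The plan is to reduce the inclusion $T_t(L^2(D))\subset L^2(D)$ to showing that, for any $f\in L^2(D)$ extended by zero outside~$D$, one has $T_t f(x)=0$ for Lebesgue-almost every $x\in D^c$. Since $\{T_t\}$ is a contraction on $L^2(\R^d)$ and since \eqref{vol1} with $\alpha<d$ forces $|K|=0$, it is enough to treat $x\in D^c\setminus K$. Fix such an $x$ and consider the Brownian motion $\{X_s\}_{s\ge 0}$ starting at $x$.

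First I would introduce the hitting time $\tau:=\inf\{s\ge 0\colon X_s\in K\}$, which is a stopping time because $K$ is closed. A short topological step, using continuity of Brownian paths, openness of~$D$, and $X_0=x\in D^c$, shows that $\{X_t\in D\}\subset\{\tau<t\}$: if $X_t\in D$, set $s^*:=\sup\{s\le t\colon X_s\in D^c\}$; closedness of $D^c$ together with continuity then force $X_{s^*}\in\overline D\cap D^c=\partial D=K$, so $\tau\le s^*<t$. Since $f$ vanishes on $D^c$, we conclude
\[
T_t f(x)=E^x\bigl[e_{V_\beta}(t)\,f(X_t)\,;\,\tau<t\bigr].
\]

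Next I would apply the strong Markov property at $\tau$. On $\{\tau<t\}$, conditionally on the past up to $\tau$, the shifted process $Y_u:=X_{\tau+u}$ is a Brownian motion starting at the random point $X_\tau\in K$. Since $\beta+\alpha\ge d$, Theorem~\ref{multiD} applies to $Y$ for every possible value $y\in K$ of $X_\tau$; averaging over the law of $X_\tau$ and taking $\delta=t-\tau>0$ yields
\[
\int_\tau^{t}V_\beta(X_s)\wrt s=\int_0^{t-\tau}V_\beta(Y_u)\wrt u=\infty
\]
almost surely on $\{\tau<t\}$. Hence $A_{V_\beta}(t)=\infty$ and $e_{V_\beta}(t)=0$ almost surely on $\{\tau<t\}$; the expectation above therefore vanishes and $T_tf(x)=0$.

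The probabilistic heart of the argument is already packaged into Theorem~\ref{multiD}, so the main obstacle I anticipate is not analytic but topological: verifying that any Brownian path from $D^c$ to $D$ strikes $K=\partial D$ strictly before time~$t$. This requires only continuity of paths and openness of~$D$; the simply connected hypothesis on~$D$ is not needed for the inclusion itself, though it underlies the idea that $K$ genuinely separates $D$ from~$D^c$. The remaining ingredients — measurability of $\tau$, the strong Markov property, and the almost-sure divergence of the Feynman--Kac exponent after the process reaches $K$ — are standard.
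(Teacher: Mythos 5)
Your argument is essentially the paper's own proof: both decompose according to the first hitting time $\tau$ of $K$, apply the strong Markov property at $\tau$, and invoke Theorem~\ref{multiD} for the Brownian motion restarted at $X_\tau\in K$ to force $e_{V_\beta}(t)=0$ on the crossing paths, hence vanishing of $T_tf$ on the other side. The only differences are cosmetic: you run the path from $x\in D^c$ toward $D$ while the paper starts from $x\in D$ (equivalent by symmetry of the kernel $p_t^{V_\beta}$), and you spell out the topological step that a continuous path from $D^c$ into the open set $D$ must first meet $\partial D$, which the paper leaves implicit.
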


\begin{proof} 
Recall that  $\Omega_x$ is the set of paths starting from $x\in {D}$. Let
 $\tau(X)$ denotes the  first hitting time of $K=\partial D$
for $X$.
Consider the  set $$\Omega_V=\left\{X: \int_0^{\delta^2}V(X_s)ds = \infty \colon \quad \forall \delta>0\right\}$$

 By Theorem \ref{multiD} and  the strong Markov property \eqref{SMP}, with  $A=\{\tau \le t\}\in \F_{\tau^+} $ and $B=\Omega_V$, see also \cite[Point 5b), page 23]{ito}
\begin{eqnarray*}
P_x\left(X\in \Omega_x: \tau\le t, \Theta_\tau X\in \Omega_V \right)
=\int_{\{\tau\le t\}} P_{X_\tau}(\Omega_V)P_x(dX)\\
=P_x(\{\tau\le t\}). 
\end{eqnarray*}
 Hence 
 the event: $$\mbox{ for all $\delta>0$ we have }\int_\tau^{\tau+\delta^2}V(X_s)ds = \infty$$ 
 holds a.s on $\{\tau\le t\}$.  Clearly for any path $X$ if $X_0 \in D$ 
 and $X_t \in D^c$ then  $\tau(X) < t$ so now
the theorem follows by applying Feynman-Kac formula \eqref{FK}.
\end{proof}

\section{Singularity of $V$ forcing separation.}\label{bridgee}
In this section we prove that Theorem~\ref{sep} is optimal, that is that the condition 
$\alpha +\beta \ge d$ is also necessary. By $p_t(x,y)$ we denote the Gaussian distribution (corresponding to $\Delta$ rather than $\Delta/2$, see the discussion in Section \ref{sec2}). 
Recall that  
$$p_t(x,y)=  (4\pi t)^{-d/2}e^{-\frac{|x-y|^2}{4t}}.$$ 
Then as before by $p_t^V(x,y)$ we denote the kernel 
of the  Feynman-Kac semigroup corresponding to the positive potential $V$
defined by \eqref{FK}. 
In these terms  Theorem~\ref{sep} can be stated in the following way: 
For any $\beta$ such that $\alpha +\beta \ge d$
$$
p_t^{V_\beta} (x,y) = 0  \quad \forall {t>0, \quad  x\in D} \quad \mbox{and} \quad \forall {y\in D^c}.
$$
 Note that the kernel $p_t^{V_\beta}$ is symmetric so it automatically follows that 
if this is the case then $p_t^{V_\beta} (x,y) = 0$ also  whenever ${x\in D^c}$
and $y\in D$. 
	
Next  we shall show that if $\alpha +\beta < d$ then 
$$
p_t^{V_\beta} (x,y) > 0 \quad \forall {t>0, x,y \in \R^d},
$$
see Theorem \ref{lowerbdth} and Corollary~\ref{c6} below. 

First for any $t>0$ we set 
$$
\Gamma_t(x,y) = \int_0^{t/2} p_s(x,y) ds= \int_0^{t/2} (4\pi s)^{-d/2}e^{-\frac{|x-y|^2}{4s}}ds.
$$
\begin{theorem}\label{lowerbdth}
Assume that $V\in L_{loc}^1(\R^d)$ is a  positive locally integrable potential.
Suppose also that for some fixed $t$ 
$$
V*\Gamma_t(x) + V*\Gamma_t(y) < \infty. 
$$
Then  $p^{V}_t(x,y) >0$. 
\end{theorem}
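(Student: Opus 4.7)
The plan is to express the Feynman-Kac kernel via Brownian bridges. Conditioning on the endpoint $X_t=y$ in the defining formula \eqref{FK} gives
$$p^V_t(x,y)=p_t(x,y)\,E^{x,y}_{0,t}[e_V(t)],$$
where $E^{x,y}_{0,t}$ denotes expectation with respect to the Brownian bridge from $x$ to $y$ on $[0,t]$. Since the Gaussian density $p_t(x,y)$ is strictly positive everywhere, proving $p^V_t(x,y)>0$ reduces to showing that the bridge expectation is strictly positive.

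This in turn will follow once I show that $A_V(t)=\int_0^t V(X_s)\,ds<\infty$ almost surely under the bridge measure: then $e_V(t)=e^{-A_V(t)}$ is a.s.\ strictly positive, and hence so is its expectation. Almost-sure finiteness will follow from finiteness in mean. By Fubini and the bridge one-dimensional distribution $p_s(x,z)p_{t-s}(z,y)/p_t(x,y)$,
$$E^{x,y}_{0,t}[A_V(t)]=\frac{1}{p_t(x,y)}\int_{\R^d}V(z)\int_0^t p_s(x,z)\,p_{t-s}(z,y)\,ds\,dz.$$
The key estimate is to split the inner time integral at $s=t/2$. For $0\le s\le t/2$ the factor $p_{t-s}(z,y)$ is bounded by $(2\pi t)^{-d/2}$, while for $t/2\le s\le t$ the factor $p_s(x,z)$ is bounded by $(2\pi t)^{-d/2}$; after a change of variable $s\mapsto t-s$ in the second piece one obtains
$$\int_0^t p_s(x,z)\,p_{t-s}(z,y)\,ds\le (2\pi t)^{-d/2}\bigl(\Gamma_t(x,z)+\Gamma_t(z,y)\bigr).$$
Multiplying by $V(z)$ and integrating over $\R^d$ yields the bound $(2\pi t)^{-d/2}p_t(x,y)^{-1}\bigl(V*\Gamma_t(x)+V*\Gamma_t(y)\bigr)$, which is finite by hypothesis.

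I expect the main technical point to be the rigorous justification of the bridge representation when $V$ is merely in $L^1_{\text{loc}}$ and possibly unbounded. I would handle this by truncation: for the bounded potentials $V_N=V\wedge N$ the identity $p^{V_N}_t(x,y)=p_t(x,y)E^{x,y}_{0,t}[e_{V_N}(t)]$ is standard, and monotone convergence (applied both to the bridge expectation, where $e_{V_N}(t)\downarrow e_V(t)$, and to the Feynman-Kac formula $T^{V_N}_t f(x)\downarrow T^V_t f(x)$) transfers the representation to $V$. Once this technicality is in place, the strict positivity of $p^V_t(x,y)$ is immediate from the calculation above.
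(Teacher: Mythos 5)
Your proposal is correct and follows essentially the same route as the paper: both rest on the Brownian bridge representation of $p^V_t(x,y)$ together with the first-moment bound $\int_0^t\int V(z)p_s(x,z)p_{t-s}(z,y)\,dz\,ds\le Ct^{-d/2}\bigl(V*\Gamma_t(x)+V*\Gamma_t(y)\bigr)$ obtained by splitting the time integral at $t/2$. The only cosmetic difference is that the paper converts this moment bound via Chebyshev's inequality into the quantitative lower bound $p^V_t(x,y)\ge \tfrac12 e^{-A}p_t(x,y)$, whereas you pass through almost-sure finiteness of $A_V(t)$ under the bridge measure, which yields positivity but no explicit constant.
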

\begin{proof}

Recall that 
 $d\mu_{x,y}$ is  the Brownian bridge  measure defined on the set { $\Omega_{x,y}^t$} of continuous sample paths connecting $x$ and $y$ and normalised in 
such way  that $\int d\mu_{x,y}(\omega)=p_t(x,y)$, see Section \ref{sec2} above.

Now the transition density formula  of the Brownian bridge, see for example 
\cite[p. 359 (6.28)]{KS}
and  Chebyshev's inequality yield 
\begin{eqnarray*}
&&  \hspace{-1.5cm}\mu_{x,y} \left(\left\{ \int_0^t V(Y_s) ds \ge A    \right\} \right)
\le \frac{1}{A}\int \int_0^t V(Y_s) ds d\mu_{x,y}(Y{})\\
&=& \frac{1}{A} \int_0^t \int_{\R^d}V(z) p_s(x-z)p_{t-s}(z-y) dz ds \\ &\le&
\frac{C}{At^{d/2}}\left(\int_0^{t/2} \int_{\R^d} V(z) p_s(x-z)dz ds  + \int_0^{t/2} \int_{\R^d}V(z) p_s(y-z) {{} dz ds}  \right) \\&& \hspace{5cm}
\le \frac{C}{At^{d/2}} \left(V*\Gamma_t(x)+V*\Gamma_t({{}y})\right).
\end{eqnarray*}
Hence for sufficiently large $A$ 
$$
\mu_{x,y}\left( \left\{ \int_0^t V(Y_s) ds \le A    \right\}\right) \ge p_t(x,y) - \frac{C}{At^{d/2}} \left(V*\Gamma_t(x)+V*\Gamma_t(y)\right) \ge \frac{1}{2} p_t(x,y).
$$
Thus 
$$
p^{V}_t(x,y) = \int \exp\left(-\int_0^t V(Y_s) ds\right) d\mu_{x,y}(Y{})\ge \frac{1}{2}e^{-A} p_t(x,y) >0.
$$
{{} This concludes proof of Theorem \ref{lowerbdth}.}
\end{proof}
As a direct consequence of {Theorem \ref{lowerbdth} we obtain the following 
corollary}
\begin{coro}\label{c6}
Under the assumptions of {\rm Theorem~\ref{sep}}
the condition  $\alpha+\beta\ge d$ is necessary for separation.
\end{coro}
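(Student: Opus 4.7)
The plan is to apply Theorem~\ref{lowerbdth} to the potential $V_\beta = d_K^{-\beta}$ in the regime $\alpha + \beta < d$. To conclude, I must verify (i) that $V_\beta \in L^1_{\rm loc}(\R^d)$ and (ii) that there exist $x \in D$, $y \in \mathrm{int}(D^c)$ and some $t>0$ for which $V_\beta * \Gamma_t(x) + V_\beta * \Gamma_t(y) < \infty$. Theorem~\ref{lowerbdth} then supplies $p_t^{V_\beta}(x,y) > 0$, contradicting the separation property~\eqref{omega}, so $\alpha + \beta \ge d$ must hold whenever separation holds.

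For step~(i), I would invoke the layer-cake identity
$$
\int_{B(x_0, r)} d_K(z)^{-\beta}\, dz = \beta \int_0^\infty \gamma^{-\beta - 1}\, \bigl|K_\gamma \cap B(x_0, r)\bigr|\, d\gamma,
$$
split the $\gamma$-integration at $\gamma = r$, use $|K_\gamma \cap B(x_0, r)| \le C_1 r^\alpha \gamma^{d-\alpha}$ from~\eqref{vol1} for $\gamma \le r$, and use the trivial bound $|K_\gamma\cap B(x_0,r)|\le C r^d$ for $\gamma > r$. The contribution over $\gamma\le r$ is a constant times $r^\alpha \int_0^r \gamma^{d-\alpha-\beta-1}\, d\gamma$, which is finite precisely when $\alpha + \beta < d$; both pieces evaluate to order $r^{d-\beta}$ for $r\le 1$. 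Hence $V_\beta$ is locally integrable and additionally $\int_{B(x_0,r)} V_\beta \lesssim r^{d-\beta}$.

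For step~(ii), I pick $x \in D$ and $y \in \mathrm{int}(D^c)$ with $r_0 := \min\{d_K(x), d_K(y)\} > 0$; such a pair exists since $D$ and $\mathrm{int}(D^c)$ are open and disjoint from $K$. I split $V_\beta * \Gamma_t(x) = \int d_K(z)^{-\beta} \Gamma_t(x,z)\, dz$ into the ball $B(x, r_0/2)$, on which $d_K(z) \ge r_0/2$ so that $V_\beta$ is uniformly bounded and the contribution is at most $(r_0/2)^{-\beta}\int_{\R^d}\Gamma_t(x,z)\,dz = (r_0/2)^{-\beta}\, t$; and its complement, on which~\eqref{e8} provides $\Gamma_t(x,z) \le C|x-z|^{2-d} e^{-c|x-z|^2/t}$ for $d>2$ (with the logarithmic variant of~\eqref{e8} for $d=2$). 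A dyadic decomposition of the complement in $|x-z|$, combined with the local bound $\int_{B(x,2^k)} V_\beta \lesssim 2^{k(d-\beta)}$ from~(i), yields a series whose convergence is trivially enforced by the Gaussian factor. The estimate for $V_\beta * \Gamma_t(y)$ is identical.

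The essential content lies in the layer-cake calculation in~(i): this is the only place the hypothesis $\alpha + \beta < d$ enters the argument, and it is where the optimality of the theorem is witnessed. After that, the freedom to choose $x, y$ at positive distance from $K$ decouples the singularities of $\Gamma_t$ (located at $x, y$) from the singularities of $V_\beta$ (located on $K$), making the remaining estimates routine. With both convolutions finite, Theorem~\ref{lowerbdth} produces $p_t^{V_\beta}(x,y) > 0$, contradicting separation and thereby establishing the necessity of $\alpha + \beta \ge d$.
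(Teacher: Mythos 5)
Your proof is correct and takes essentially the same route as the paper: both arguments verify the hypothesis $V_\beta*\Gamma_t(x)+V_\beta*\Gamma_t(y)<\infty$ of Theorem~\ref{lowerbdth} by integrating $d_K^{-\beta}$ against the neighbourhood-volume bound \eqref{vol1} over geometric scales near $K$, with $\alpha+\beta<d$ being precisely the condition that makes the resulting sum/integral converge. The only cosmetic difference is that the paper recycles the shell estimate for $\operatorname{E}(Z_n)$ from the proof of Theorem~\ref{multiD}, which gives finiteness of the convolution even at points of $K$, whereas your layer-cake computation with $x,y$ chosen at positive distance from $K$ (and the Gaussian tail handling the far region) is a self-contained variant that is equally sufficient to contradict separation.
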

{{}
\begin{proof}
The proof is essentially repetition of the proof of estimate \eqref{aaa} from the proof of 
Theorem~\ref{multiD}. More precisely one can notice that if we set $t=\delta^2$ then
$$
V*\Gamma_t(x) \le C\sum_{n>0} a^{-n\beta} E(Z_n)  \le C\sum_{n>0} a^{-n\beta} a^{n(d-\alpha)}.
$$
Recall that $a<1$ so the above sum is finite if $\alpha + \beta < d$. Now Corollary~\ref{c6} follows 
from Theorem~\ref{lowerbdth}.
\end{proof}
}
{{} As an illustration of Theorem~\ref{sep} and Corollary \ref{c6} we would like to describe the construction of  the van Koch snowflake
curve. }

\begin{example} 
Van Koch snowflake. Consider an equilateral triangle $K_0$
with sides of unit length. Next define a curve $K_1$ by  replacing the middle 
of all edges of $K_0$ by the two sides of the equilateral triangle based
on the middle every segment. 
Next define  $K_2$ by repeating the same  procedure on each of the twelfth edges  of $K_1$.
Von Koch snowflake, which we denote by $K$ is the self-similar set obtained by iteration of this 
procedure. Its Minkowski dimension is equal to $\alpha =\log4/\log3$ {{} and it satisfies assumptions \eqref{vol1} and \eqref{vol2}  with this $\alpha$ and $d=2$.} 
  
\end{example}
{{}Thus as a straightforward consequence of our results we obtain the following corollary 
\begin{coro}\label{c7}
Suppose that $D \subset \R^2$ is the region of the plane inside the Von Koch snowflake $K$ and $p_t^{V_\beta}$ is the heat kernel 
corresponding to the operator $\Delta -d_K^{-\beta}$. Then for any $\beta \ge 2- \log4/\log3$
$$
p_t^{V_\beta} (x,y) = 0 \quad \forall {t>0, x\in D} \quad \mbox{and} \quad \forall {y\in D^c}.
$$
When $\beta < 2- \log4/\log3$ then 
$$p_t^{V_\beta} (x,y)>0   \quad \forall  x,y\in \R^2   \quad \mbox{and} \quad \forall t>0.$$ 
\end{coro}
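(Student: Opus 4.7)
The plan is to deduce Corollary~\ref{c7} directly from Theorem~\ref{sep} and Corollary~\ref{c6}, using the fact (asserted in the Example above) that the van Koch snowflake $K\subset\R^2$ obeys \eqref{vol1}--\eqref{vol2} with $d=2$ and $\alpha=\log 4/\log 3\in(1,2)$. Granted this, the standing requirement $d>\alpha>d-2$ of Theorem~\ref{sep} is $2>\log 4/\log 3>0$, which holds, and the enclosed region $D$ is open and simply connected by the Jordan curve theorem applied to $K$ (the uniform limit of the nested Jordan polygons $K_n$).

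The first half of the corollary is then immediate: the hypothesis $\beta\ge 2-\log 4/\log 3$ is equivalent to $\alpha+\beta\ge d$, so Theorem~\ref{sep} gives the invariance $T_t(L^2(D))\subset L^2(D)$, i.e.\ $p_t^{V_\beta}(x,y)=0$ for all $x\in D$, $y\in D^c$, $t>0$. Conversely, when $\beta<2-\log 4/\log 3$ we have $\alpha+\beta<d$, so Corollary~\ref{c6} applies and yields $p_t^{V_\beta}(x,y)>0$ for all $x,y\in\R^2$ and $t>0$.

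The only technical content outside the already-proved theorems is the verification of \eqref{vol1}--\eqref{vol2} for the snowflake. For \eqref{vol1}, given $\gamma\in(0,1]$, I would choose $n$ with $3^{-n-1}<\gamma\le 3^{-n}$, observe that $K_\gamma$ is contained in a fixed enlargement of the $n$th polygonal approximation $K_n$ (which has $3\cdot 4^n$ segments of length $3^{-n}$), and note that at most $O(r^\alpha 4^n)$ of those segments meet $B(x,r)$, each contributing an area $O(3^{-n}\gamma)$, giving
\[
|K_\gamma\cap B(x,r)|\le C\,r^\alpha 4^n\cdot 3^{-n}\gamma \asymp C\,r^\alpha\gamma^{d-\alpha},
\]
since $4^n\cdot 3^{-n}=3^{n(\alpha-1)}$ and $3^{-n}\asymp\gamma$.

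The lower bound \eqref{vol2} is the main obstacle. It relies on the self-similar decomposition of $K$ into $4^n$ scaled sub-copies at scale $3^{-n}$, and one must extract a sub-family whose $\gamma$-neighborhoods have uniformly bounded overlap, so that their individual areas $\asymp\gamma^2$ aggregate to $c\,r^\alpha\gamma^{d-\alpha}$. Concretely, for $x\in K_{r/2}$ a sub-snowflake of diameter comparable to $r$ sits inside $B(x,r)$; partitioning it into $\asymp(r/\gamma)^\alpha$ sub-copies of diameter $\asymp\gamma$ and thinning to a well-separated sub-collection (for instance, taking every $M$th piece in a fixed enumeration for a suitable absolute constant $M$) produces the desired lower estimate with a universal constant independent of $x$, $r$ and $\gamma$.
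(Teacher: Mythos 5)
Your proposal is correct and follows essentially the same route as the paper: Corollary~\ref{c7} is obtained there as a direct consequence of Theorem~\ref{sep} and Corollary~\ref{c6}, using that the snowflake satisfies \eqref{vol1}--\eqref{vol2} with $d=2$ and $\alpha=\log 4/\log 3$, so the threshold $\alpha+\beta\ge d$ becomes $\beta\ge 2-\log 4/\log 3$. The only difference is that you sketch the verification of \eqref{vol1}--\eqref{vol2} for the snowflake, which the paper simply cites as a standard fact (referring to \cite{Fal}); your sketch is consistent with that standard argument.
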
}

\section{Estimates of the rate of separation for truncated potentials}\label{sec5}

In this section we consider the potential
$$
V_\beta^A(x)= C_VA^{\beta}\chi_{\{d_K(x){{}\le } A^{-1}\}}={{} C_VA^{\beta}\chi_{K_{A^{-1}}}}(x)
$$
We fix a supercritical exponent $\alpha+\beta>d$.
Denote by $p_t^A(x,y)$  the  kernel of the Feynman-Kac semigroup 
 generated by {{} the operator} $$-L_{V^A_\beta}=\Delta-V_\beta^A$$ with the natural domain.
It is well known that
the functions  $p_t^A(x,y)$ are continuous in $x,y,t\in \R^d\times  (0,\infty)$.

It is convenient for us to introduce at this point the following definition of the uniform domain type.  Our definition is a variant of 
Definition~3.2 of \cite{GS} but is motivated by the definition of  NTA (nontangentially accessible) domains introduced
by Jerison and Kenig in \cite{JK}. See also the discussion in Section 3.1.3 of \cite{GS}.

\begin{definition}
Let ${D}\subset \R^d$ be a connected subset of  $\R^d$. 
We say that  ${D}$ satisfies the   inside NTA condition if there are constants $c, C$ such that, for
any $x, y \in D$ in the interior of $D$ there exists a continuous curve $\gamma_{x,y} \colon [0,1] \to D$ such that $\gamma_{x,y}(0)=x$ 
and $\gamma_{x,y}(1)=y$  and the following two properties are satisfied 
\begin{itemize}
\item[1.] The length $L(\gamma_{x,y})$ is at most $C|x-y|$.
\item[2.] For any $z\in \gamma_{x,y}([0,1])$ 
$$
 d_{\partial D} (z) \ge c \min( L(\gamma_{x,z}), L(\gamma_{y,z})  ).
$$
\end{itemize}
We say that ${D}$ is  an 
$NTA$ domain if both  ${D}$ and ${D}^c$ satisfies the inside $NTA$ condition.
\end{definition}

We say that an open domain  ${D}\subset \R^d$ is separated from 
a given ball  $B_0=B(x,r) \subset \R^d$ if $2B_0=B(x,2r) \subset {D}^{c}=\R^d \setminus {D}$. 
The following  statement is the main result discussed  in this section. 

\begin{theorem}\label{main1}
Suppose that set ${D} \subset \R^d$ is 
open simply connected   and that its boundary  $K=\partial {D}$ satisfies conditions 
\eqref{vol1}  and \eqref{vol2}  for some 
$d> \alpha >0$ and that $\alpha+\beta>d \ge 2
$. Let  ${D}$ be an  $NTA$ domain.
There exists a constant $\sigma>0$ such that for any point  $x\in {D}$, any ball $B_0$ separated from ${D}$, and any positive $t>0$ there exists a constant $C=C(x,t)$ such that 
\begin{equation}\label{mainthesis}
\int_{B_0}p_t^A(x,y) dy \le C A^{-\sigma}
\end{equation}
for all $A>0$. 
\end{theorem}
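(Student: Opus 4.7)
The plan is to combine the Feynman--Kac formula with an iterative application of the Paley--Zygmund estimate from the proof of Theorem~\ref{multiD}, running the iteration over successive returns of the Brownian motion to $K=\partial D$. By \eqref{FK}, $\int_{B_0} p_t^A(x,y)\,dy = E^{x}[e_{V_\beta^A}(t)\chi_{B_0}(X_t)]$, and since $x\in D$ and $B_0\subset D^c$ every contributing path must hit $K$ before time $t$. Applying the strong Markov property at the first hitting time $\tau_K$ and using $\exp(-\int_0^{\tau_K}V_\beta^A)\le 1$, the problem reduces to showing
$$\sup_{z\in K,\ 0<s\le t}\ E^{z}\bigl[e_{V_\beta^A}(s)\chi_{B_0}(X_s)\bigr]\le CA^{-\sigma}.$$

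The key one-step ingredient is a quantitative form of Theorem~\ref{multiD}: specializing \eqref{PZest} to the scale $n_0$ with $a^{n_0}\approx A^{-1}$, together with the observation that estimates \eqref{aaa}, \eqref{bbb} are in fact valid for every starting point $z\in K_{\delta/2}$ (this is exactly the range of validity of \eqref{vol3}), yields
$$P^{z}\!\left(\int_0^{\delta^2}V_\beta^A(X_u)\,du\ge c\,A^{\beta+\alpha-d}\delta^{2-d+\alpha}\right)\ge \sigma_0 \quad \text{for } \delta\ge A^{-1/2},$$
with absolute constants $c,\sigma_0>0$; the threshold diverges polynomially in $A$ because $\alpha+\beta>d$. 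I then iterate. Pick $N=\lceil c_1\log A\rceil$ and $\delta^2=s/(4N)$ (so that $\delta\ge A^{-1/2}$ for large $A$), and define stopping times $\eta_0:=0$ and $\eta_{k+1}:=\inf\{u\ge \eta_k+\delta^2 : X_u\in K\}$. By the strong Markov property applied at each $\eta_k$ together with the one-step estimate, on the event $\{\eta_k+\delta^2\le s\}$ the event
$$G_k:=\Bigl\{\int_{\eta_k}^{\eta_k+\delta^2}V_\beta^A(X_u)\,du\ge c\,A^{\beta+\alpha-d}\delta^{2-d+\alpha}\Bigr\}$$
satisfies $P(G_k\mid\mathcal{F}_{\eta_k})\ge\sigma_0$. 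Cascading the conditional bounds gives
$$P\bigl(G_1^c\cap\cdots\cap G_N^c,\ \eta_N\le s\bigr)\le(1-\sigma_0)^N\le A^{-\sigma},$$
and on the complementary event at least one $G_k$ holds, so $e_{V_\beta^A}(s)\le\exp(-cA^{\beta+\alpha-d}\delta^{2-d+\alpha})$ is super-polynomially small.

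The main obstacle is to control $P(\eta_N>s)$, i.e., to rule out the possibility that the Brownian motion escapes from $K$ and fails to return to it $N\sim\log A$ times within time $s$. Each waiting time $\eta_{k+1}-\eta_k-\delta^2$ is the first hitting time of $K$ for a Brownian motion whose initial position $X_{\eta_k+\delta^2}$ lies within distance $O(\delta)$ of $X_{\eta_k}\in K$. Both the lower volume bound \eqref{vol2} (giving a Wiener-test type positive probability of hitting $K$ within time $O(\delta^2)$) and the NTA hypothesis on $D$ (ruling out geometric bottlenecks along which the Brownian motion could wander far from $K$ before returning) enter here: together they should yield that these hitting times have sub-exponential tails with mean $O(\delta^2)$, and a concentration estimate for $\eta_N=\sum_{k=1}^{N}(\eta_k-\eta_{k-1})$ then bounds $P(\eta_N>s)\le A^{-\sigma}$. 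Summing the three contributions delivers the claimed $CA^{-\sigma}$ bound.
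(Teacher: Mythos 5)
Your overall architecture is the right one and parallels the paper's: decompose according to the successive crossings of $K$, apply a Paley--Zygmund lower bound (the paper's Lemma \ref{l8}, your one-step estimate) at each crossing, and conclude that either the path accumulates a potential integral of size $A^{\beta+\alpha-d}\delta^{2-d+\alpha}$ on some crossing (making $e_{V}$ super-polynomially small) or it fails the Paley--Zygmund event on every crossing, an event of probability at most $(1-\sigma_0)^{\#\mathrm{crossings}}$. Your stopping-time formulation of this dichotomy is even attractive, since by dropping the indicator $\chi_{B_0}(X_s)$ in the ``all $G_k$ fail'' term you avoid the Brownian-bridge comparisons that the paper needs (its Claim about the shifted grid, Lemmas \ref{l9} and \ref{l10}).

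The genuine gap is in the third term, $P(\eta_N>s)$, and the proposed fix is wrong. It is not true that the return times $\eta_{k+1}-\eta_k$ have sub-exponential tails with mean $O(\delta^2)$: after a crossing, the Brownian motion moves to distance of order $1$ from $K$ (into the bulk of $D$ or of $D^c$) with probability bounded below by a constant independent of $\delta$, and from there it need not return to $K$ at all within time $s$. Consequently $P(\eta_N>s)$ is bounded below by a constant, not by $A^{-\sigma}$, and no concentration argument for $\eta_N$ can save this; neither \eqref{vol2} nor the NTA condition forces recurrence to $K$ at scale $\delta^2$. What actually makes the few-crossings contribution small is the terminal constraint: one must estimate $P(\eta_N>s,\ X_s\in B_0)$, and on this event the path, after its last visit to $K$ (at distance at most $C\delta\sqrt{\log H}$ from $K$ a short time later, which requires an oscillation bound of the type of Lemma \ref{l10}), must reach $B_0\subset D^c$ without ever re-hitting $K$. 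This is exactly where the NTA hypothesis enters in the paper, through the harmonic-measure estimate \eqref{NTA} of Lemma \ref{har}, $h(z)\le C\,d_K(z)^{\gamma}$, which bounds this escape probability by $C(\delta\sqrt{\log H})^{\gamma}$; summing over the at most $N$ (in the paper, at most $H^{\kappa'}$) possible indices of the last crossing gives the polynomially small bound of Lemma \ref{l11}, and this term is in fact the one that determines the exponent $\sigma$ in \eqref{mainthesis}. So you have misidentified the role of the NTA condition (it bounds the probability of escaping from near $K$ to $B_0$ without re-hitting $K$; it does not force returns to $K$), and without that harmonic-measure step your argument does not close.
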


\begin{remark} By the  standard elliptic estimates (or by a slightly more technical variant of our argument) one can obtain a pointwise estimate $p_t^A(x,y)  \le C A^{-\sigma}$.
\end{remark}
\begin{remark}
 In this work we are interested in the existence of a positive $\sigma$ satisfying \eqref{mainthesis}.
{ We want to point out however that our methods can yield estimates \eqref{mainthesis} for a larger range of parameter $\sigma$ at the cost of some further work and more complex and tedious argument.  
We do not discuss the details of such strengthen result  in the present paper.}
\end{remark}

\begin{remark} The value of $\sigma $  in our approach depends on  the domain. 
The constant $C$ depends on the domain and the value of the multiplicative  constant $C_V$. 
\end{remark}
For the sake of simplicity we will consider only the case  $C_V=1$.
Without loss of generality we can assume that $t=1$. First we prove a series of technical lemmata concerning properties of the Brownian motion.
\begin{lemma}\label{l8}
 There exist constants ${{} 0<\zeta<1}$ and $ \sigma_0>0$ such that
\begin{equation*}
P_x\left( \int_0^{\delta^2}V_\beta^{A}(X_t) dt  < {{} \zeta}  \operatorname{E}\bigg( \int_0^{\delta^2}V_\beta^{A}(X_t) dt  \bigg)\right)
\le 1- \sigma_0 
\end{equation*}
for all $\delta>0$  and $A>0$ and every  starting point $X_0=x\in K$.
\end{lemma}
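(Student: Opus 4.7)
The plan is to invoke the Paley--Zygmund inequality (Proposition~1) applied to the random variable
$$
Z \;=\; \int_0^{\delta^2} V^A(X_t)\,dt \;=\; A^{\beta}\int_0^{\delta^2}\chi_{K_{A^{-1}}}(X_t)\,dt.
$$
Since Paley--Zygmund with $\theta=\zeta$ yields $P(Z\ge\zeta\operatorname{E}(Z))\ge(1-\zeta)^2\,\operatorname{E}(Z)^2/\operatorname{E}(Z^2)$, the assertion of the lemma reduces to the uniform bound
$$
\frac{\operatorname{E}(Z)^2}{\operatorname{E}(Z^2)}\;\ge\; c\;>\;0,
$$
with $c$ independent of $\delta>0$, $A>0$ and $x\in K$.

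First I would evaluate $\operatorname{E}(Z)$ by Fubini, obtaining $\operatorname{E}(Z)=A^{\beta}\int_{K_{A^{-1}}}\Gamma_{\delta^2}(x,y)\,dy$. Using the Gaussian estimate \eqref{e8} together with the volume bounds \eqref{vol1}--\eqref{vol2}, and performing a dyadic decomposition of $B(x,\delta)\cap K_{A^{-1}}$ centred at $x\in K$, one obtains a two-sided estimate
$$
\operatorname{E}(Z)\;\sim\;A^{\beta}M(A,\delta),
$$
where $M(A,\delta)\sim A^{-(d-\alpha)}\delta^{2-d+\alpha}$ in the regime $A\delta\ge 1$ (this is essentially the proof of \eqref{aaa} applied to the full neighbourhood $K_{A^{-1}}$ rather than a single shell $K'_n$), and $M(A,\delta)\sim\delta^{2}$ in the regime $A\delta<1$ (since then $B(x,\delta)\subset K_{A^{-1}}$ so that $V^A$ is simply $A^{\beta}$ on the ball).

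Next, to bound $\operatorname{E}(Z^2)$ from above I would repeat the calculation carried out in the proof of \eqref{bbb}: split $Z^2$ into the two triangles $\{s\le t\}$ and $\{t\le s\}$ and invoke independence of $X_s$ and $X_t-X_s$. After the usual change of variables the resulting iterated integral factorises into the product of two integrals of the same type as the one appearing in $\operatorname{E}(Z)$, one centred at $x$ and one centred at a running point $y\in K_{A^{-1}}$. The latter can be handled by choosing $y'\in K$ with $|y-y'|\le A^{-1}$ and reducing to the estimate just obtained, at the cost of a purely geometric constant coming from \eqref{vol1}. The outcome is $\operatorname{E}(Z^2)\lesssim (A^{\beta}M(A,\delta))^2$, which combined with the lower bound on $\operatorname{E}(Z)$ gives the required ratio bound.

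The main obstacle is to keep the constants uniform across the two scaling regimes $A\delta\ge 1$ and $A\delta<1$, as well as uniform in the starting point $x\in K$. The second-moment estimate is the delicate point, because the inner integral is centred not at a point of $K$ but at a generic point of $K_{A^{-1}}$; a short argument passing to a nearby point of $K$ and applying \eqref{vol1} at a comparable scale removes this issue without losing uniformity. As in the proof of Theorem~\ref{multiD}, the case $d=2$ is handled by replacing \eqref{e8} with its logarithmic counterpart, and no further changes are required.
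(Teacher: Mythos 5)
Your proposal is correct and follows essentially the same route as the paper, which proves Lemma~\ref{l8} precisely by replacing $\chi_{K'_n}$ with $\chi_{K_{A^{-1}}}$ and rerunning the first- and second-moment estimates \eqref{aaa} and \eqref{bbb} to apply the Paley--Zygmund inequality as in \eqref{PZest}. Your explicit splitting into the regimes $A\delta\ge 1$ and $A\delta<1$ (and the remark about centring the inner integral at a point of $K_{A^{-1}}$, which is in fact harmless since \eqref{vol1} holds for arbitrary centres) only spells out details the paper leaves implicit.
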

\begin{proof}
To prove the statement we use an argument similar to one which we used to verify  \eqref{PZest}. We have to replace  $\chi_{K'_n}$ in the definition of $Z_n$ by
$\chi_{\{d_K(x){{}\le } A^{-1}\}}$. Then the statement follows
by replicating the calculations leading to  \eqref{PZest} which we use in the proof of Theorem \ref{multiD}. To avoid repetition we omit the details.
\end{proof}

Let ${\bf w}_\lambda=\lambda \dot (1,1,\ldots, 1)\in \R^d$ where $\lambda \in \R$ will be specified later. Consider the decomposition of $\R^d$ into  congruent cubes of side length $\delta$ obtained by the $\delta \Z^d$ translations of ${\bf w}_\lambda+[0,\delta]^d$ and denote by
$Q_1, \ldots, Q_M$ all these cubes, such that {$Q_j \cap K \neq \emptyset$} for $j=1,\ldots, M$.  
By \eqref{vol1} one  immediately gets  
$
 \frac1C \delta^{-\alpha}\le M \le C\delta^{-\alpha}
$
where $C$ does not depend on $\lambda$.
We fix some small $v>0$, $\delta \approx A^{-v}$ and $H\in \N$ in such a way that $2H+2=\delta^{-2}$.
For any $h\in  \{0, \ldots, H\}$  we set 
$$
I_h=[2h\delta^2, (2h+1)\delta^2) \quad \mbox{and} \quad J_h=[(2h+1)\delta^2, (2h+2)\delta^2).
$$

Let $1\le H_0 \le H$ be a  fixed  number. 
We will consider multi-indexes of the form  $j_1, \ldots, j_{H_0}, k_1, \ldots, k_{H_0}$
such that $1 \le j_s \le M$, $0 \le k_s \le H$, $k_1 < k_2 \ldots < k_{H_0}$ for all $ 1 \le s \le H_0$. 

Let $\Omega$  be the set of all Brownian paths $X$ such that $X_0=x$ and 
$X_1\in B_0$.
 We define the  family of the  subsets 
${{}\Phi}_{j_1, \ldots, j_{H_0}, k_1, \ldots, k_{H_0}}\subset \Omega$ of $\Omega$ by requiring the following conditions: 
\begin{enumerate}
\item
For every $ 1 \le s \le H_0$ there exists time $t'_s \in I_{k_s}$ such that  for the path $X\in \Omega $,  $X_{t'_s}\in K$. By $t_s$ we denote the smallest 
such $t'_s$ (the first hitting time of $K$ for  $t \in I_{k_s}$ )
\item For every $ 1 \le s \le H_0$, it holds that  $X_{t_s}\in Q_{j_s}$.
\item 
For every  $t \in I_h$ and for any $h$ not listed in the sequence 
$k_1, \ldots, k_{H_0}$ one has $X_t \notin K$.
\end{enumerate}
  The sets ${{}\Phi}_{j_1, \ldots, j_{H_0}, k_1, \ldots, k_{H_0}}\subset \Omega$ do not need to be disjoint. We observe that a given 
sample path belongs to the two different sets ${{}\Phi}_{j_1, \ldots, j_{H_0}, k_1, \ldots, k_{H_0}}$	if the first hitting point $X_\tau$ for some $\tau \in I_h$ belongs to two different cubes $Q$ of the grid. It is possible only if $X_\tau$ belongs to common wall of two neighbouring cubes 
$Q$. We have

{\bf Claim. } There exists $\lambda \in [0,\delta]$ such that the  Wiener measure of the set $\lambda $ of trajectories which do not uniquely determine cubes  
$Q_{j_1} \ldots,Q_{j_{H_0}}$ is zero. 
\begin{proof}
Indeed,   denote by $\tau\in I_{k_j}$ the first hitting time of $K$.  
Fix the   $d-1$ or lower dimensional common wall ${\bf W}$ of two cubes and observe that all of its translations
 ${\bf W}_\lambda  ={\bf W+w}_\lambda$ are pairwise disjoint. Hence the events  $\{X_\tau \in {\bf W}_\lambda\}$
are pairwise disjoint, so only for at most countably many of  ${ \lambda}$ will the event   $\{X_\tau \in {\bf W}_\lambda \}$
 have positive Wiener measure.  The claim follows. 
\end{proof}
From now on we fix $\lambda$ given by the claim and consider the grid of cubes corresponding to ${\bf w}_\lambda$.
We subtract the  set $E_\lambda$ from $\Omega$ and ${{}\Phi}_{j_1, \ldots, j_{H_0}, k_1, \ldots, k_{H_0}}$ and
 denote the new sets again  by $\Omega$
and ${{}\Phi}_{j_1, \ldots, j_{H_0}, k_1, \ldots, k_{H_0}}$. 
Now it is straightforward to see that the sets ${{}\Phi}_{j_1, \ldots, j_{H_0}, k_1, \ldots, k_{H_0}}$ are pairwise disjoint.
Moreover the set
$$\bigcup_{j_1, \ldots, j_{H_0}, k_1, \ldots, k_{H_0}} {{}\Phi}_{j_1, \ldots, j_{H_0}, k_1, \ldots, k_{H_0}}$$
 (up to a subset of measure zero)
consists of  such $X \in \Omega$  that  the path
$X $ for all $h=1,\ldots, H $ intersects  $K$ for exactly $H_0$ out of $H$ intervals  $I_h$. 
We will use {{} these} facts in the sequel.

Now  by $q_{j_s}$ we denote the center of $Q_{j_s}$ and set 
$$\gamma_{\delta,H}=\delta ^2(\log{H})^{-3}=(\log{H})^{-3}/(2H+2).
$$ 
Next,  
for any $\eta \ge 1$  and $x,y \in \R^d$, such that 
$$
|x-q_{j_s}|\le \eta \delta \sqrt{\log{H}}, \mbox{ \hskip1cm and \hskip.5cm}  |y-q_{j_s}|\le \eta \delta  \sqrt{\log{H}}
$$
{we define a set $\Psi_0$ by the formula 
$$
{} \quad  \quad{{} \Psi_0}= \left\{X \in \Omega_{x,y}^{2\delta^2} \colon  \tau_{j_s}\le \delta^2 ,\,
|X_{\tau_{j_s}}-X_{\tau_{j_s}+\gamma_{\delta,H} }|  \le \frac{\eta \delta}{ \log{H}} \right\},
$$}
{where   
$\tau_{j_s}=\inf\{t\le \delta^2:X_t\in K \cap Q_{j_s}\}$ or $\tau_{j_s}=\infty$ if the set is empty. 
{{} Then we set  
\begin{eqnarray}\label{p}
 p(x,y)=\mu_{x,y}(\Psi_0),
\end{eqnarray}
where $d\mu_{x,y}$ is the  Brownian bridge measure, see the definition in Section~\ref{sec2}.
Then we define 
$$
 \quad  \quad{{} \Psi_{0,r}}= \left\{X \in \Omega_{x} \colon  \tau_{j_s}\le \delta^2 ,\,
 |X_{\tau_{j_s}}-X_{\tau_{j_s}+\gamma_{\delta,H} }|  \le \frac{\eta \delta}{ \log{H}}, |X_{2\delta^2}-y|\le r \right\}.
$$}
We put  
$$
{{} \Psi_1}= \left\{X \in {{} \Psi_0} \colon 
\int_{\tau_{j_s}}^{\tau_{j_s}+\gamma_{\delta,H} } V_\beta^A(X_u)du  < {{} \zeta} E\left( \int_{\tau_{j_s}}^{\tau_{j_s}+\gamma_{\delta,H} } V_\beta^A(X_u)\right) du  \right\}
$$
where the constant $0<{{} \zeta}<1$ has been defined in Lemma \ref{l8}
and similarly as above we set 
\begin{eqnarray}\label{pt}
\tilde{p}(x,y)=   \mu_{x,y}(\Psi_1). 
\end{eqnarray}
We define the set $\Psi_{1,r}$ replacing $\Psi_0$ by $\Psi_{0,r}$ in the definition of $\Psi_1$. }

In what follows we will need the relation $\tau_{j_s}+\gamma_{\delta,H}\le \tau_{j_{s+1}}$.
We have ensured this property by separating subsequent intervals $I_s,I_{s+1}$ by $J_s$.

The following elementary  observation is critical for our argument
\begin{lemma}\label{l9} 
Under the above definitions  there exists a constant   $ {\xi} <1$   such that 
$$
\tilde{p}(x,y) \le {\xi} p(x,y).
$$
uniformly for all indices $x,y, A, j_s$ defined above and  {{} sufficiently large $H\ge H_{min}(\eta)$.}
\end{lemma}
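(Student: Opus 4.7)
The plan is to reduce, via the strong Markov property of the Brownian bridge at the stopping time $\tau_{j_s}$, to a Paley-Zygmund estimate on an independent Brownian bridge of length $\gamma_{\delta,H}$ starting from $z:=X_{\tau_{j_s}}\in K$. First I would fix the pair $(z,w):=(X_{\tau_{j_s}},X_{\tau_{j_s}+\gamma_{\delta,H}})$ together with the restriction of the path outside the interval $[\tau_{j_s},\tau_{j_s}+\gamma_{\delta,H}]$. By the strong Markov property applied to $d\mu_{x,y}$, together with the conditional-independence decomposition of a Brownian bridge at an intermediate time, the piece $Y_u:=X_{\tau_{j_s}+u}$ for $0\le u\le \gamma_{\delta,H}$ is an independent Brownian bridge from $z$ to $w$. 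Since the events $\Psi_0$ and $\Psi_1$ differ only by a condition measurable with respect to this independent piece, the lemma reduces to showing
$$
Q_{z,w}\!\left(\int_0^{\gamma_{\delta,H}} V^A(Y_u)\,du \,<\, \zeta\,\operatorname{E}\!\int_0^{\gamma_{\delta,H}} V^A(Y_u)\,du\right) \,\le\, 1-\sigma_0'
$$
for some $\sigma_0'>0$ independent of $A>0$ and of admissible pairs $(z,w)$ with $z\in K\cap Q_{j_s}$ and $|z-w|\le \eta\delta/\log H$, provided $H\ge H_{\min}(\eta)$; here $Q_{z,w}$ denotes the bridge law and the expectation is taken under it. We may then take $\beta=1-\sigma_0'$.

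The next step is to verify this bound by applying the Paley-Zygmund inequality to $Z:=\int_0^{\gamma_{\delta,H}} V^A(Y_u)\,du$ under $Q_{z,w}$. Using the explicit bridge density
$$
q_u^{z,w}(x')=\frac{p_u(z,x')\,p_{\gamma_{\delta,H}-u}(x',w)}{p_{\gamma_{\delta,H}}(z,w)}
$$
and Fubini, the moments $\operatorname{E}(Z)$ and $\operatorname{E}(Z^2)$ reduce to integrals of exactly the form already treated in the proof of Theorem~\ref{multiD}, but with the free heat kernel replaced by the bridge density. The lower bound on $\operatorname{E}(Z)$ comes from restricting $u$ to the sub-interval $[\gamma_{\delta,H}/3,2\gamma_{\delta,H}/3]$ and the spatial integration to $K_{A^{-1}}\cap B(z,\sqrt{\gamma_{\delta,H}})$, then applying the two-sided bound \eqref{e8} and the volume estimate \eqref{vol2} (applicable because $z\in K$ and $\sqrt{\gamma_{\delta,H}}\gg A^{-1}$ for the parameter regime of interest). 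The upper bound on $\operatorname{E}(Z^2)$ follows from the same dyadic annular decomposition in the inner spatial variable used in the proof of Theorem~\ref{multiD}.

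The main technical obstacle is the dependence of the Paley-Zygmund quotient on the bridge parameters $(z,w)$. Because $|z-w|\le \eta\delta/\log H$ can exceed the typical bridge spread $\sqrt{\gamma_{\delta,H}}=\delta(\log H)^{-3/2}$ by a factor of order $(\log H)^{1/2}$, the bridge density is not uniformly comparable to the free Gaussian density; it differs by the exponential factor $\exp(|z-w|^2/(4\gamma_{\delta,H}))\le H^{O(\eta^2)}$ arising from $1/p_{\gamma_{\delta,H}}(z,w)$. Careful bookkeeping shows that the dominant $(z,w)$-dependence enters both $\operatorname{E}(Z)$ and $\operatorname{E}(Z^2)$ through this common factor, while the remaining factors admit the same lower (resp.\ upper) bounds as in Theorem~\ref{multiD}, up to constants depending only on $\alpha$, $\beta$, $d$ and $\eta$. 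Choosing $H_{\min}(\eta)$ large enough to absorb the resulting logarithmic corrections yields the uniform Paley-Zygmund lower bound, hence the desired contraction factor $\beta<1$.
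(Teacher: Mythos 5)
There is a genuine gap, and it sits precisely at the point you yourself call ``the main technical obstacle''. Your disintegration conditions on the exact endpoint $w=X_{\tau_{j_s}+\gamma_{\delta,H}}$, so you must prove a Paley--Zygmund type bound for the normalized bridge law $Q_{z,w}$ over time $\gamma_{\delta,H}$ \emph{uniformly} over admissible pairs, including those with $|z-w|\asymp \eta\delta/\log H=\eta\sqrt{\log H}\,\sqrt{\gamma_{\delta,H}}$, i.e.\ bridges stretched by order $\sqrt{\log H}$ standard deviations. For such bridges your key claim --- that the $(z,w)$-dependence of $\operatorname{E}(Z)$ and $\operatorname{E}(Z^2)$ enters only through the common factor $\exp(|z-w|^2/(4\gamma_{\delta,H}))$ coming from $1/p_{\gamma_{\delta,H}}(z,w)$ --- is not correct: the numerator $p_u(z,x')\,p_{\gamma_{\delta,H}-u}(x',w)$ also depends on $w$, and for $x'$ near $z$ and $u\asymp\gamma_{\delta,H}$ the ratio $p_{\gamma_{\delta,H}-u}(x',w)/p_{\gamma_{\delta,H}}(z,w)$ carries an extra factor of order $\exp(-c|z-w|^2/\gamma_{\delta,H})$, which can be as small as $H^{-c\eta^2}$. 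Hence your proposed lower bound for $\operatorname{E}_{Q_{z,w}}(Z)$, obtained by restricting to $u\in[\gamma_{\delta,H}/3,2\gamma_{\delta,H}/3]$ and to $x'\in K_{A^{-1}}\cap B(z,\sqrt{\gamma_{\delta,H}})$, degenerates by a power of $H$, and the moments are \emph{not} ``exactly of the form treated in Theorem~\ref{multiD}''. Worse, the statement you reduce to is not the one the lemma needs: in \eqref{pt} (as in Lemma~\ref{l8}, whose constant $\zeta$ is being used, and as used later in the proof of Theorem~\ref{main1}) the threshold is $\zeta$ times the expectation for the \emph{free} Brownian motion started at the hitting point, not the bridge expectation. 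A strongly stretched bridge is dragged away from $z$ after an effective time of order $\gamma_{\delta,H}/(\eta^2\log H)$, so already in the model case where $K$ is a hyperplane (admissible, with $\alpha=d-1$) one can choose $w$ pointing away from $K$ so that $\operatorname{E}_{Q_{z,w}}(Z)$ is smaller than the free expectation by a positive power of $1/\log H$; Markov's inequality then gives $Q_{z,w}\big(Z\ge\zeta\operatorname{E}_{\mathrm{free}}(Z)\big)\to 0$ as $H\to\infty$, so no uniform constant $\sigma_0'$ exists for the event you actually need. Conditioning on the endpoint therefore cannot yield the lemma in the form stated without substantial changes elsewhere.

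The paper's proof is built exactly to avoid this trap: it never conditions on the endpoint of the short interval. One factors the unnormalized bridge measure at the stopping time, keeps the free Wiener law on $[\tau_{j_s},\tau_{j_s}+\gamma_{\delta,H}]$ (so that Lemma~\ref{l8} applies verbatim, giving the defective measure $\nu_2$ total mass at most $1-\sigma_0$), and observes that the terminal weight $p_{2\delta^2-(\tau+\gamma_{\delta,H})}(w-y+z)$ is constant up to factors $1\pm C\eta^2/\sqrt{\log H}$ over all allowed displacements $|z|\le\eta\delta/\log H$ --- this is precisely why that displacement restriction was built into the definition \eqref{p}. The nearly constant weight then converts the free-motion Paley--Zygmund bound into $\tilde p\le(1-\sigma_0/2)\,p$ for $H\ge H_{min}(\eta)$. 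If you want to salvage your route you would have to both replace the threshold in \eqref{pt} by a bridge quantity (and track the resulting loss through the proof of Theorem~\ref{main1}) and prove new, uniform two-sided occupation estimates for stretched bridges; neither is supplied by your sketch.
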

\begin{proof}
In the proof of the lemma, for notational  convenience, 
we put $\tau=\tau_{j_s}$.
By the definition of ${{} \Psi_0}$, the variable
$\tau$ is finite for  all paths in ${{} \Psi_0}$. Denote by $d\mu (\tau, w) $ the joint distribution of the variables $\tau, X_{\tau}$ defined by \eqref{tau}  for $\Psi_0=A$. 
Obviously $d\mu$ is supported on $S= [0,\delta^2]\times ( K \cap Q_{j_s})$. 
Next for any subset $G \subset \R^d$ put
\begin{eqnarray*}
\nu_1(G)= P_0\left(X_{\gamma_{\delta,H} }\in G\right).
\end{eqnarray*}
{{} Note that at this point $\nu_1$ is just the Gaussian distribution and recall that $\gamma_{\delta,H}=\delta ^2(\log{H})^{-3}$.
Next, we consider the  set $K_r$ defined by \eqref{K} and  we observe that {{} by the strong Markov property \eqref{SMP} 
 applied to $A=\Psi_{0,r}$, $\eta_1(\tau)=\tau+\gamma_{\delta,H}$,
$\eta_2(\tau)=2\delta^2-\gamma_{\delta,H}-\tau$ and $B=\left\{|w-X_{\tau_{}+\gamma_{\delta,H} }| \le  \frac{\eta \delta}{\log{H}}, |X_{2\delta^2}-y|\le r  \right\}$, it follows that}
\begin{eqnarray*}p(x,y)&=&\mu_{x,y}({{} \Psi_0})=\lim_{r\rightarrow 0}|K_r|^{-1}P_x(\Psi_{0,r})=\\ &=&
\lim_{r\rightarrow 0}|K_r|^{-1}\int_{S} P_w\left(|w-X_{\tau_{}+\gamma_{\delta,H} }| \le  \frac{\eta \delta}{\log{H}}, |X_{2\delta^2}-y|\le r  \right) d\mu(\tau, w) \\&=&
\lim_{r\rightarrow 0}|K_r|^{-1}
\int_{S} \int_{|z|\le \frac{\eta \delta}{ \log{H} }}P_{w-z}\left(|X_{2\delta^2-({\tau_{}+\gamma_{\delta,H}})}-y|\le r   \right)   d\nu_1(z) d\mu(\tau, w)\\ &\ge&
\bigg(1-\frac{C\eta^2}{\sqrt{\log{H}}}\bigg)\int_{S} \int_{|z|\le \frac{\eta \delta}{ \log{H} }}p_{2\delta^2-(\tau_{}+\gamma_{\delta,H})}(w-y)
d\nu_1(z)d\mu(\tau, w)\\ &\ge& \bigg(1-\frac{C\eta^2}{\sqrt{\log{H}}}\bigg)
\int_{S} \int_{|z|\le \frac{\eta \delta}{ \log{H} }}p_{2\delta^2-(\tau_{}+\gamma_{\delta,H})}(w-y)
p_{\gamma_{\delta,H}}(z) dz d\mu(\tau, w)\\ 
&\ge& \bigg(1-\frac{C\eta^2}{\sqrt{\log{H}}}\bigg)^2
\int_{S}p_{2\delta^2-(\tau_{}+\gamma_{\delta,H})}(w-y) d\mu(\tau, w).
\end{eqnarray*}
In the above estimates we used the first and third  of the elementary inequalities        
$$
 \bigg(1-\frac{C\eta^2}{\sqrt{\log{H}}}\bigg)p_{2\delta^2-(\tau_{}+\gamma_{\delta,H})}(w-y) \le p_{2\delta^2-(\tau_{}+\gamma_{\delta,H})}(w-y+z)
$$
$$
\bigg(1-\frac{C\eta^2}{\sqrt{\log{H}}}\bigg)p_{2\delta^2-(\tau_{}+\gamma_{\delta,H})}(w-y+z) \le p_{2\delta^2-(\tau_{}+\gamma_{\delta,H})}(w-y) 
$$
$$
\int_{|z|\le \frac{\eta \delta}{ \log{H} }}p_{\gamma_{\delta,H}}(z) dz \ge 1-\frac{C}{\log{H}}
$$
valid for any $w$ and $\tau$ from the domain of integration, and $z,y,\eta$ as above.

Now set 
\begin{equation*}
{{}\Phi_{V_\beta^A}}=\bigg\{ X \colon
 \,
\int _{0}^{\gamma_{\delta,H} } V_\beta^A(X_u)du  < \zeta E_x\bigg( \int_{0}^{\gamma_{\delta,H} } V_\beta^A(X_u)  du\bigg)       \bigg\}
\end{equation*}
and then put 
\begin{eqnarray*}
{ {\nu_2} (G)=}  P_w \Big( X \colon \, X \in {{}\Phi_{V_\beta^A}}, \,
X_{\gamma_{\delta,H} }\in w+G \Big).
\end{eqnarray*}
It follows from  Lemma \ref{l8} that $\nu_2(\R^d) \le 1-\sigma_0$ so by \eqref{SMP}}
\begin{eqnarray*}
\tilde p(x,y)&=&\mu_{x,y}({{} \Psi_1})=\lim_{r\rightarrow 0}|K_r|^{-1}P_x(\Psi_{1,r})=
\\&=&\lim_{r\rightarrow 0}|K_r|^{-1}\int_{S} \int_{|z|\le \frac{\eta \delta}{ \log{H} }}P_{w-z}\left(|X_{2\delta^2-(\tau_{}+\gamma_{\delta,H})}-y|\le r   \right)d\nu_2(z)d\mu(\tau, w)\\ 
&\le& \bigg(1+\frac{C\eta^2}{\sqrt{\log{H}}}\bigg)\left(1-\sigma_0\right)\int_{S}p_{2\delta^2-(\tau_{}+\gamma_{\delta,H})}(w-y) d\mu(\tau, w).
\end{eqnarray*}
The lemma follows for ${\xi} = 1-\frac{\sigma_0}{2}$ and sufficiently large $H\ge H_{min}(\eta)$.
\end{proof}

Recall that we denote the centre of the cube $Q_{j_s}$ by $q_{j_s}$. {{} Slightly abusing the notation we marginally change the meaning  of 
$\tau_{j_s}$	be setting  by $\tau_{j_s}=\inf\{t\in I_{k_s} \colon \, X_t \in K \cap Q_{j_s}\}$. }
Next, we put 
$$
\Lambda =\bigcup_{1\le H_0\le H} \quad \bigcup_{j_1, \ldots, j_{H_0}, k_1, \ldots, k_{H_0}} {{}\Phi}_{j_1, \ldots, j_{H_0}, k_1, \ldots, k_{H_0} }
$$ 
where we sum over the set of all indices 
$1\le H_0\le H$ and $j_1\ldots, j_{H_0}, k_1, \ldots, k_{H_0}$
such that the system of inequalities 
\begin{eqnarray*}
|X_{2k_s\delta^2}-q_{{j_s}}|\le \eta \delta \sqrt{\log H}\, , \quad \quad  |X_{(2k_s+2)\delta^2}-q_{{j_s}}|\le \eta \delta 
\sqrt{\log H}
\end{eqnarray*}
and 
\begin{eqnarray*}
 |X_{\tau_{j_s}}-X_{\tau_{j_s}+\gamma_{\delta,H} }| \le \frac{\eta \delta} {\log{H}}
\end{eqnarray*}
is not satisfied  for at least one of  $ 1 \le s \le H_0$. 
Then obviously
\begin{eqnarray} \nonumber 
\Lambda \subset \tilde{\Lambda}&=&  \bigg\{ X \colon 
\max_{h \in \{0,1,\ldots,H\}} 
|X_{2h\delta^2}-X_{(2h+2)\delta^2}| > 2\eta \delta \sqrt{\log H} 
 \bigg\} \\ & \bigcup_{} &
\bigg\{ X \colon
\sup_{t_1,t_2\in [0,1], |t_1-t_2|\le \gamma_{\delta,H}} |X_{t_1}-X_{t_2}| > \frac{\eta \delta} {\log{H}}
 \bigg\}. \label{drugi}
\end{eqnarray}
The set $\tilde\Lambda$ is of small probability. In fact we have
\begin{lemma}\label{l10} 
For any $\rho>0$ there exists a constant $\eta$ such that the set $\tilde{\Lambda}$ defined above satisfies the  estimate 
$$
P(\tilde{\Lambda}) \le {H}^{-\rho}
$$
for all sufficiently large $H$. 
\end{lemma}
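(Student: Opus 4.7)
The plan is to bound the two sets comprising $\tilde\Lambda$ separately, each by a Gaussian tail estimate combined with a union bound, and then choose $\eta$ large enough that every resulting factor of $H^{1-c\eta^2}$ (or similar) drops below $H^{-\rho}$.

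First I handle the piece controlling the block increments. The vector $X_{(2h+2)\delta^2}-X_{2h\delta^2}$ is a centred Gaussian in $\R^d$ with covariance $2\delta^2 I$. Hence for each $h$, a standard Gaussian tail gives
\begin{equation*}
P\bigl(|X_{2h\delta^2}-X_{(2h+2)\delta^2}|>2\eta\delta\sqrt{\log H}\bigr)\le C\exp\bigl(-c\eta^2\log H\bigr)=CH^{-c\eta^2}.
\end{equation*}
Taking a union bound over the $H+1$ values of $h$ (and recalling $2H+2=\delta^{-2}$) yields a bound of order $H^{1-c\eta^2}$, which is $\le \tfrac12 H^{-\rho}$ once $\eta$ is chosen large enough in terms of $\rho$.

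For the modulus-of-continuity piece, I would discretise $[0,1]$ into $N=\lceil 1/\gamma_{\delta,H}\rceil$ intervals of length $\gamma_{\delta,H}=\delta^2(\log H)^{-3}$. Any two times with $|t_1-t_2|\le\gamma_{\delta,H}$ lie in at most two consecutive such intervals, so
\begin{equation*}
\sup_{|t_1-t_2|\le\gamma_{\delta,H}}|X_{t_1}-X_{t_2}|\le 2\max_{0\le i<N}\sup_{s\in[i\gamma_{\delta,H},(i+1)\gamma_{\delta,H}]}|X_s-X_{i\gamma_{\delta,H}}|.
\end{equation*}
By the Markov property, each inner supremum is distributed as $\sup_{s\in[0,\gamma_{\delta,H}]}|X_s-X_0|$, and by the reflection principle applied coordinate-wise,
\begin{equation*}
P\Bigl(\sup_{s\in[0,\gamma_{\delta,H}]}|X_s-X_0|>\tfrac{\eta\delta}{2\log H}\Bigr)\le C\exp\!\Bigl(-c\,\tfrac{\eta^2\delta^2/\log^2 H}{\gamma_{\delta,H}}\Bigr)=C\exp(-c\eta^2\log H)=CH^{-c\eta^2},
\end{equation*}
where the key cancellation is that the extra $(\log H)^{3}$ in $\gamma_{\delta,H}^{-1}$ beats the $\log^2 H$ in the denominator of the exponent. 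Union-bounding over the $N\le (\log H)^{3}(2H+2)$ intervals gives a bound of order $(\log H)^{3}H^{1-c\eta^2}$, again $\le\tfrac12 H^{-\rho}$ for $\eta$ large and $H$ large.

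Summing the two contributions proves $P(\tilde\Lambda)\le H^{-\rho}$. The main point to get right is the calibration of scales: $\gamma_{\delta,H}=\delta^2(\log H)^{-3}$ was chosen precisely so that the ``tiny'' oscillation threshold $\eta\delta/\log H$ is a large multiple of the typical Brownian increment $\sqrt{\gamma_{\delta,H}}=\delta/(\log H)^{3/2}$, by a factor of order $\sqrt{\log H}$, which is what converts a reflection-principle estimate into a negative power of $H$. Once this is noted, only routine Gaussian tail bounds and a union bound remain, and I expect no essential obstacle beyond keeping track of these powers of $\log H$.
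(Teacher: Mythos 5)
Your argument is correct and is essentially the paper's own proof, which likewise handles the block increments by a Gaussian tail estimate plus a union bound over the $O(H)$ blocks and the oscillation term by the reflection principle on intervals of length $\gamma_{\delta,H}$, with $\eta$ then taken large in terms of $\rho$. The only quibble is the chaining constant in your modulus-of-continuity reduction (two times within $\gamma_{\delta,H}$ of each other can straddle two consecutive blocks, giving a factor $3$ rather than $2$), which only changes the per-block threshold by a harmless constant and does not affect the conclusion.
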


\begin{proof}
This  lemma follows exactly  in the same way as the proof of H\"older regularity of the Brownian motion.
Directly one can easily check that
$$
P_x\left(|X_{2h\delta^2}-X_{(2h+2)\delta^2}| > \eta \delta \sqrt{\log H}/2\right) \le \exp(-c\eta \log H)
$$
and then sum-up the estimates.
{{} The estimates for the probability of  the second set in \eqref{drugi} follows from the following two observations:

\begin{itemize}
	\item Firstly 
	\begin{eqnarray*}
	\bigg\{ X \colon
	\sup_{t_1,t_2\in [0,1], |t_1-t_2|\le \gamma_{\delta,H}} |X_{t_1}-X_{t_2}| > \frac{\eta \delta} {\log{H}}
	\bigg\} \\
	\subset \bigcup_{0 \le j \le \gamma_{\delta,H}^{-1} }
		\bigg\{ X:
	\sup_{  j \gamma_{\delta,H} \le   t_1  <   (j+1) \gamma_{\delta,H}} |X_{ j \gamma_{\delta,H}}-X_{t_1}| > \frac{\eta \delta} {2\log{H}} 
	 \bigg\}.
	\end{eqnarray*}

	\item Secondly, each event of the RHS of above inclusion has the probability estimated from above. 
	Indeed, if $\{Y_t\}_{t\ge 0}$ is a one dimensional Brownian motion 
	starting at $0$ then by the reflection principle, see for example  \cite[p. 26, formula 4)]{ito}
	$$
		P_x\left( \left\{Y:	\sup_{s<t}{Y_s >a }\right\}\right)=2 	P_x\left(\left\{Y_t >a \right\}\right)
	$$	
	for all $t,a>0$. 
By  Markov property it follows that 	
		\begin{eqnarray*}
		P_x\left(\bigg\{ X:
		\sup_{  j \gamma_{\delta,H} \le   t_1  <   (j+1) \gamma_{\delta,H}} |X_{ j \gamma_{\delta,H}}-X_{t_1}| > \frac{\eta \delta} {2\log{H}} 
		\bigg\}\right)\\
	 \le 
			2P_x\left(\bigg\{ X:
		 |X_{ j \gamma_{\delta,H}}-X_{(j+1) \gamma_{\delta,H}}| > \frac{\eta \delta} {2\sqrt n\log{H}} 
		\bigg\}\right).
	\end{eqnarray*}
	 
\end{itemize}
Then again we sum-up the estimates and 
the lemma follows. }

\end{proof}

In what follow we will also need the following
standard fact about $NTA$ domains

\begin{lemma}\label{har} 
{\rm A)} Assume  $D\subset \R^d$ is an   $NTA$ domain, and let $B_0$, be a closed ball
contained in the interior of $D$ and separated from $K=\partial D$. 
Then, there exists $\gamma >0$, such that the harmonic function on $D\setminus B_0$ vanishing on $K$ and equal to $1$ on 
 $\partial B_0$ satisfies
 \begin{equation}\label{NTA}
h(x)\le C( d_K(x))^\gamma .
\end{equation}
The same statement is valid for $D^c$.

{\rm B)} For dimension  $d=2$ estimate \eqref{NTA} holds both for the domain $D$ 
and its complement $D^c$ with exponent $\gamma =1/2$. 
\end{lemma}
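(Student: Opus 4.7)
The plan is to prove part (A) by a dyadic decay argument using the exterior NTA corkscrew, and to prove part (B) by invoking Beurling's projection theorem for planar harmonic functions.

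For part (A), I would fix a boundary point $y\in K$ and, for $r$ small compared with $d(B_0,K)$, track the quantity $M(y,r)=\sup_{D\cap B(y,r)}h$. Since $h\ge 0$ is harmonic in $D\setminus B_0$ and vanishes on $K$, the maximum principle in $D\cap B(y,2r)$ gives
\[
h(x)\le M(y,2r)\cdot \omega_x\bigl(\partial B(y,2r)\cap D;\,D\cap B(y,2r)\bigr)\qquad\text{for }x\in D\cap B(y,r).
\]
The outside NTA condition provides an exterior corkscrew ball $B^*\subset D^c\cap B(y,2r)$ of radius $\sim r$. A capacity estimate (using Brownian scaling and the fact that $B^*$ has a fixed relative capacity in $B(y,2r)$) then shows that from every $x\in D\cap B(y,r)$ the Brownian motion hits $K$ before exiting $B(y,2r)$ with probability at least a universal $\sigma_0>0$. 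Hence the harmonic measure above is at most $1-\sigma_0$, and we obtain the one-scale decay $M(y,r)\le(1-\sigma_0)\,M(y,2r)$. Iterating from a fixed initial scale down to $r=d_K(x)$ yields $h(x)\le C\,d_K(x)^\gamma$ with $\gamma=-\log_2(1-\sigma_0)>0$. The parallel statement for $D^c$ follows by swapping the roles of the inside and outside corkscrews.

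For part (B), in dimension two the boundary $K$ of a bounded simply connected domain is a continuum of positive diameter. Beurling's projection theorem yields, for any harmonic function $u$ bounded by $1$ in a planar domain and vanishing on a connected boundary piece $K\cap B(y,R)$ joining $y$ to $\partial B(y,R)$, the sharp pointwise estimate $|u(z)|\le C(|z-y|/R)^{1/2}$. Taking $y$ to be the nearest point of $K$ to $x$ and fixing $R$ as a constant smaller than the diameter of $K$ gives \eqref{NTA} with the explicit exponent $\gamma=1/2$. The unbounded component of $D^c$ shares the same connected boundary $K$ and is handled identically (or via a Kelvin transform for the behaviour at infinity).

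The main technical obstacle is establishing the uniform lower bound $\sigma_0$ on the Brownian hitting probability in part (A): a point $x$ can lie deep inside $D\cap B(y,r)$, where $B^*$ is not visible through any direct line-of-sight argument. Overcoming this requires the Jerison--Kenig style comparison: one uses the interior NTA corkscrew at scale $r$ together with a Harnack chain (of bounded length, guaranteed by the NTA definition) to transfer the hitting probability from a corkscrew point to an arbitrary $x\in D\cap B(y,r)$. The full NTA structure, and not merely Ahlfors regularity of $K$, is exactly what guarantees such Harnack chains at every scale.
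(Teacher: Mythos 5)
Your argument follows essentially the same route as the paper: part (A) is a geometric-decay iteration powered solely by the exterior corkscrew ball that the NTA hypothesis provides inside $D^c$, and part (B) is exactly the paper's appeal to Beurling's projection theorem. The only organizational difference in (A) is that the paper iterates over the distance level sets $K_j=\{x\in D: d_K(x)=a^j\}$ and obtains the one-step decay by an elementary time-$r^2$ Gaussian computation (the path started at $x$ lands in the exterior ball $B_1\subset D^c$ of radius comparable to $d_K(x)$ with probability at least $p_0$, while reaching $K_{j-1}$ within time $r^2$ has probability at most $C_1\exp(-1/(C_1 a))$), whereas you iterate over balls $B(y,2^{-k})$ centered at boundary points and phrase the one-step bound via harmonic measure and a capacity estimate; these are interchangeable bookkeeping choices. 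One correction to your closing paragraph: the ``main technical obstacle'' you describe is not an obstacle, and the fix you propose is the one weak point of your write-up. The uniform lower bound $\sigma_0$ requires neither the interior corkscrew nor Harnack chains: if the path hits $B^*\subset D^c$ before leaving $B(y,2r)$ it must cross $K$ inside $B(y,2r)$, and the probability of hitting a ball of radius at least $cr$ lying within distance of order $r$ of the starting point, before exiting a ball of comparable radius, is bounded below by a constant depending only on $c$ and $d$, uniformly over all starting points $x\in D\cap B(y,r)$ --- no visibility or chaining is needed (this is precisely why the paper's argument, which places the exterior ball near $x$ at scale $d_K(x)$, needs nothing beyond Gaussian bounds). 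Moreover, the Harnack-chain transfer as you state it would fail exactly where you invoke it, since a chain joining a corkscrew point to a point $x$ with $d_K(x)\ll r$ has length of order $\log(r/d_K(x))$ rather than bounded length; fortunately your capacity estimate already covers all $x$, so that detour should simply be deleted rather than repaired.
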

\begin{proof}
We briefly sketch the  proof. We start with Part A. 
Since $D$ satisfies  $NTA$ condition, the Dirichlet problem for $D$ is solvable and 
there exists a function, harmonic in  $D \setminus B_0$ such that  $h(x)=0$ for $x\in K$ and $h(y)=1$ for $y\in \partial B_0$ .
Define $K_j=\{x\in D: d_K(x)= a^j\}$ for sufficiently small fixed $a<1$.
Fix {{} $x\in K_j$. Let $B_1\subset D^c$ be a ball with center $y_0$ and radius  $r$  such that 
$|x-y_0|\le 2 d_K(x)$ 
and, for  some $c$ depending only on the domain $D$
$$
2a^{j}=2 d_K(x) \ge r \ge c d_K(x)=ca^{j}.
$$
Such a ball exists by the $NTA$ conditions for $D$. Now observe that 
$P_x\left(X_{r^2}\in B_1\right)\ge p_0$ and  
$P_x\left(\mbox{there exists $t\le r^2$, such that }X_{t}\in K_{j-1}\right) \le C_1\exp{(-\frac{1}{C_1a})}$ where $p_0,C_1$ depends only on the 
$NTA$ constants of the domain, not on $a$. Consequently, for some  small enough  $a>0$, we have}
\begin{eqnarray*}
P_x\left(\mbox{diffusion starting from 
$K_j$ hits $K$, not hitting $K_{j-1} $ before }\right)\hspace{2cm}{}   \\  \ge
P_x\left(X_{r^2}\in B_1\right)-P_x\left(\mbox{there exists $t\le r^2$, such that }X_{t}\in K_{j-1}\right)  \ge \frac{p_0}{2}
\end{eqnarray*}
for all $j \in \N$.
Hence, any harmonic function bounded by $1$ on $K_{j-1}$ and vanishing on $K$ must be bounded
by $1-\frac{p_0}{2}$ on $K_j$. By a simple induction argument, we obtain $h(x)\le (1-\frac{p_0}{2})^{j-j_0}$
 for  $x\in K_j$, where $j_0$ is the minimal index such that $K_{j_0}$ does not intersect $B_0$.
Now Part A of Lemma \ref{har} follows by the maximum principle.
\medskip 
 
 Part B of the theorem is a consequence of Beurling projection theorem, see \cite[Theorem 3-6, page 43]{Al}. A probabilistic proof  of Beurling projection theorem is described in \cite{Ok}.
 

\end{proof}
{\bf Remark} Note that in the whole paper  we use the NTA conditions only to obtain estimate \eqref{NTA}.
\bigskip

From now on we fix large $\rho$ and the corresponding $\eta=\eta_0 $ given by Lemma \ref{l10}. We assume {{} $H\ge H_{min}(\eta_0)$, where $H_{min}(\eta_0)$ is chosen in the same way as in Lemma \ref{l9}.} 

In the next lemma we estimate the probability of the set of paths for which  the number $H_0$ is small.  
\begin{lemma}\label{l11} 
There exist  constant $C$ such that for any $\kappa''>\kappa' >0$ the following estimate holds
$$
P_x\left(X: \#\{h: X \mbox{ hits the boundary for $t\in I_h$}\} <H^{\kappa'} \right)< CH^{\kappa''}\delta^{\theta}.
$$
where $\theta=2\gamma$ and  $\gamma$ is the exponent from  Lemma \ref{har}.
\end{lemma}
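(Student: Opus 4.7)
The plan is to exploit the NTA harmonic function estimate from Lemma \ref{har} together with the H\"older control of Lemma \ref{l10}. The intuition is that the event ``few hit intervals'' forces a long time-stretch where the path avoids $K$, and the harmonic measure estimate makes such an avoidance expensive in probability.

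\textbf{Step 1 (good H\"older event).} First I would fix $\rho$ much larger than $\kappa''$ and apply Lemma \ref{l10} with this $\rho$ to get the corresponding $\eta=\eta(\rho)$. We work on $\Omega\setminus\tilde\Lambda$, thereby paying an additive error $H^{-\rho}$, which is absorbed into the final bound. On this set, the increment of $X$ over any sub-interval of length $\delta^2$ is at most $\eta\delta\sqrt{\log H}$.

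\textbf{Step 2 (pigeonhole for a long hit-free block).} If the number of hit intervals is less than $H^{\kappa'}$, then the at most $H^{\kappa'}+1$ runs of non-hit intervals partition a set of size $H+1-N\ge H-H^{\kappa'}$. Thus there exist an index $h_0\in\{0,\dots,H\}$ and a block length $L=\lceil H^{1-\kappa'}/4\rceil$ such that all intervals $I_{h_0},I_{h_0+1},\dots,I_{h_0+L-1}$ contain no hit of $K$. Call this event $\mathcal N_{h_0,L}$. We have $\{N<H^{\kappa'}\}\setminus\tilde\Lambda\subset\bigcup_{h_0}\mathcal N_{h_0,L}$.

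\textbf{Step 3 (control of the entry distance to $K$).} For $h_0=0$ the starting point is $x$ itself, with $d_K(x)>0$ fixed. For $h_0>0$, if the interval just preceding the block contained a hit in $I$ or in the adjacent $J$, then on the H\"older-good event $d_K(X_{2h_0\delta^2})\le \eta\delta\sqrt{\log H}$. Otherwise the block is preceded by an even longer hit-free stretch, and we may simply enlarge the block and handle the larger one. In either case, we reduce to bounding $P_y(X$ does not meet $K$ before time $L\delta^2)$ with $d_K(y)\le \eta\delta\sqrt{\log H}$.

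\textbf{Step 4 (harmonic estimate).} By Lemma \ref{har} and the strong Markov property, for $y$ in $D$ or $D^c$ at distance $r$ from $K$ one has
\[
P_y\bigl(X_t\notin K\text{ for all }t\le T\bigr)\le C\,\bigl(r/\sqrt{T}\bigr)^{\gamma},
\]
obtained by comparing the hitting probability of $K$ before leaving a large ball to the harmonic function bound $h\le C d_K^{\gamma}$. Substituting $r=\eta\delta\sqrt{\log H}$ and $T=L\delta^2\asymp \delta^2 H^{1-\kappa'}$ gives
\[
P(\mathcal N_{h_0,L})\le C\bigl(\log H/L\bigr)^{\gamma/2}\le C(\log H)^{\gamma/2}H^{-\gamma(1-\kappa')/2}.
\]

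\textbf{Step 5 (union bound and choice of constants).} Summing over the at most $H+1$ choices of $h_0$ and combining with Step 1,
\[
P\bigl(N<H^{\kappa'}\bigr)\le H^{-\rho}+C(H+1)(\log H)^{\gamma/2}H^{-\gamma(1-\kappa')/2}.
\]
Since $\delta\asymp H^{-1/2}$, this is of the shape $CH^{\kappa''}\delta^\theta$ provided we choose $\kappa'$ sufficiently small (depending on $\kappa''$ and on $\gamma$) and $\theta$ so that $\theta/2<\kappa''-1+\gamma(1-\kappa')/2$, after absorbing the $(\log H)^{\gamma/2}$ factor into a slight decrease of $\theta$.

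\textbf{Main obstacle.} The delicate step is Step 3: when $h_0$ is large, the path could in principle have wandered far from $K$ during the earlier non-hit intervals, so there is no a priori reason for $X_{2h_0\delta^2}$ to be close to $K$. The way to handle this is to replace the block of length $L$ by the \emph{maximal} run of consecutive non-hit intervals containing it, and then to argue that the run is preceded either by the starting point $x$ itself (handled directly from the fixed $d_K(x)>0$) or by a hit, after which the H\"older bound on the good event forces the required proximity to $K$. Keeping track of the combinatorial factor so that the final exponent matches $CH^{\kappa''}\delta^\theta$ uniformly in $\kappa'<\kappa''$ is the only real bookkeeping issue.
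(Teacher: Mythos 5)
There is a genuine gap. Your argument never uses the constraint, implicit in the definition of $\Omega$ and of the sets ${{}\Phi}_{j_1,\ldots,j_{H_0},k_1,\ldots,k_{H_0}}$, that the paths end in the ball $B_0\subset D^c$ separated from $D$. Without it the lemma cannot hold as you argue it: a path started at $x$ with $d_K(x)>0$ fixed may avoid $K$ entirely on $[0,1]$ with probability bounded below by a constant, and your Step 3 fallback (``handled directly from the fixed $d_K(x)>0$'') gives no smallness at all in that case. The smallness in the paper comes precisely from the terminal condition: conditioning on the last hit interval $I_{k_{H_0}}$, stopping at time $(2k_{H_0}+2)\delta^2$ (where, off $\Lambda$, the path sits within $\sim\eta\delta\sqrt{\log H}$ of $K$), and applying Lemma \ref{har} to bound the probability of afterwards reaching the fixed, separated ball $B_0$ without touching $K$ by $C(\delta^2\sqrt{\log H})^\gamma\lesssim\delta^{\theta}$; the factor $H^{\kappa''}$ then comes only from summing over $H_0\le H^{\kappa'}$, because for each fixed $H_0$ the enlarged stopped sets $\widetilde{{}\Phi}_{j_1,\ldots,j_{H_0},k_1,\ldots,k_{H_0}}$ are pairwise disjoint, so their probabilities sum to at most $1$.

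Even on the event that the path does hit $K$, your quantitative bookkeeping fails. Your per-block bound is $C(\log H)^{\gamma/2}H^{-\gamma(1-\kappa')/2}$, and the union over the $H+1$ possible positions $h_0$ of the hit-free block produces $C(\log H)^{\gamma/2}H^{1-\gamma(1-\kappa')/2}$, which is larger than $1$ whenever $\gamma\le 2$; your own requirement $\theta/2<\kappa''-1+\gamma(1-\kappa')/2$ forces $\gamma>2(1-\kappa'')/(1-\kappa')$, which is unavailable: Lemma \ref{har} only provides some (possibly small) $\gamma>0$, equal to $1/2$ in the planar Beurling case, and the lemma must hold for arbitrarily small $\kappa''>\kappa'>0$ (indeed it is applied with small $\kappa''$ in the proof of Theorem \ref{main1}), so $\theta$ cannot be chosen as you propose. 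A secondary point: the scaled avoidance estimate $P_y(X_t\notin K,\ t\le T)\le C(r/\sqrt{T})^{\gamma}$ is not what Lemma \ref{har} states (that lemma is formulated at the fixed scale of $B_0$ and bounds the probability of hitting $B_0$ before $K$); it would need a separate scale-invariant hitting argument. The viable route is the paper's: keep the endpoint constraint $X_1\in B_0$, decompose by the multi-index of hit intervals and cubes, use the Markov property at the end of the last hit interval, and let Lemma \ref{har} supply the single factor $\delta^{\theta}$, with the combinatorics controlled by disjointness rather than a union bound over block positions.
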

\begin{proof}
Let ${{}\Phi}_{j_1, \ldots, j_{H_0}, k_1, \ldots, k_{H_0}, 0 }={{}\Phi}_{j_1, \ldots, j_{H_0}, k_1, \ldots, k_{H_0}} \setminus \Lambda$. Note that 
\begin{eqnarray*}
{{}\Phi}_{j_1, \ldots, j_{H_0}, k_1, \ldots, k_{H_0}, 0 } \subset {{}\widetilde{\Phi}}_{j_1, \ldots, j_{H_0}, k_1, \ldots, k_{H_0} }
\end{eqnarray*}
where 
\begin{eqnarray*}
\leftline{$ {{}\widetilde{\Phi}}_{j_1, \ldots, j_{H_0}, k_1, \ldots, k_{H_0} }=$}\\=\left\{X \colon \mbox{ $X_{t}=\tilde{X_{t}}$ for all $t\le (2k_{H_0}+2)\delta^2$ and some $\tilde{X}\in {{}\Phi}_{j_1, \ldots, j_{H_0}, k_1, \ldots, k_{H_0}, 0 }$	}\right\}.
\end{eqnarray*}
Next, observe that for a fixed $H_0\ge 1$, the sets $ {{}\widetilde{\Phi}}_{j_1, \ldots, j_{H_0}, k_1, \ldots, k_{H_0}} $ are mutually disjoint. Let $\theta = 2\gamma$, ($\gamma$ defined by \eqref{NTA}).
We will prove the estimate
\begin{eqnarray*}
P_x\left( {{}\Phi}_{j_1, \ldots, j_{H_0}, k_1, \ldots, k_{H_0}, 0 } \right)  \le 
C_{\theta} \delta^{\theta} P_x\left( {{}\widetilde{\Phi}}_{j_1, \ldots, j_{H_0}, k_1, \ldots, k_{H_0}} \right).
\end{eqnarray*}
Next, we  define the measure  $\nu_3$ by the formula 
$$\nu_3( {{}G})=P_x\left(X\in  {{}\widetilde{\Phi}}_{j_1, \ldots, j_{H_0}, k_1, \ldots, k_{H_0}}: X_{ (2k_{H_0}+2)\delta^2}\in {{}G}\right).$$
By the Markov property of Brownian motion we have
\begin{eqnarray*}
&P_x&\!\!\!\Big({{}\Phi}_{j_1, \ldots, j_{H_0}, k_1, \ldots, k_{H_0}, 0 }\Big) \ \\
& =&\int P_x\left(X_{1- (2k_{H_0}+2)\delta^2}\in B_0 \mbox{ and } X_s\cap K =\emptyset
\mbox{ for $0\le s \le 1- (2k_{H_0}+2)\delta^2 $}   \right) d\nu_3(x)
\end{eqnarray*}
where by the definition of $ {{}\Phi}_{j_1, \ldots, j_{H_0}, k_1, \ldots, k_{H_0} }$, for $x\in \mbox{supp} 
\{d\nu_3\}$  we have $$|x-q_{k_{H_0}}|\le \delta^2 \sqrt{\log{H}}.  $$ 
By  Lemma \ref{har} 
\begin{eqnarray*}
 P_x\left(X_{1- (2k_{H_0}+2)\delta^2}\in B_0 \mbox{ and } X_s\cap K =\emptyset
\mbox{ for $0\le s \le 1- (2k_{H_0}+2)\delta^2 $}   \right)\\  \le
 P_x\Big( X: X \mbox{ hits first time into $B_0$, not into $K$}  \Big)\\
 \le  C\left(\delta^2 \sqrt{\log{H}}\right)^\gamma {}\hspace{2cm}
\end{eqnarray*}
and consequently, since $\nu_3 (\R^d) = P_x\left( {{}\widetilde{\Phi}}_{j_1, \ldots, j_{H_0}, k_1, \ldots, k_{H_0}} \right)$ it follows that 
$$
P_x\left({{}\Phi}_{j_1, \ldots, j_{H_0}, k_1, \ldots, k_{H_0}, 0 }\right)\le  
C\left(\delta^2 \sqrt{\log{H}}\right)^\gamma  P_x\left( {{}\widetilde{\Phi}}_{j_1, \ldots, j_{H_0}, k_1, \ldots, k_{H_0}} \right).
$$

Hence, since ${{}\widetilde{\Phi}}_{j_1, \ldots, j_{H_0}, k_1, \ldots, k_{H_0}}$
are mutually disjoint, we obtain 
\begin{eqnarray*}
\sum_{1\le H_0 \le H^{\kappa'}}\quad \sum_{j_1, \ldots, j_{H_0}, k_1, \ldots, k_{H_0}}P_x\left( {{}\Phi}_{j_1, \ldots, j_{H_0}, k_1, \ldots, k_{H_0}, 0} \right) 
 \le C H^{\kappa''}\delta^{\theta}.
\end{eqnarray*}
Now the lemma follows from the above estimates and  Lemma \ref{l10}.

\end{proof}
Now we are ready to prove   Theorem \ref{main1}. 

\medskip

{\it Proof of  Theorem \ref{main1}.}
{{} Our aim is to apply Feynman-Kac formula \eqref{FK}.
	Observe that without loss of generality, in \eqref{FK}  we can consider the expectation  over  the set of trajectories  
 which }
hit $K$ at a time  $\tau \in I_h $ for some $h$. Indeed,  if the path does not cross $K$ at any  $t\in I_h$ for any $h$,
then it must cross $K$ at a time  $\tau \in J_h$  for some $h$, and we repeat the  argument  replacing $I_h$ by $J_h$.

Now consider the set 
\begin{eqnarray*}
\Phi_{small}=\bigcup_{H_0 \le  H^{\kappa'}} \quad  \bigcup_{j_1, \ldots, j_{H_0}, k_1, \ldots, k_{H_0}} {{}\Phi}_{j_1, \ldots, j_{H_0}, k_1, \ldots, k_{H_0}, 0 }
\end{eqnarray*}
Recall, that we   set  $\delta \approx A^{-v}$, $H\approx \delta^{-2}$ so by Lemma \ref{l11}
\begin{equation}\label{small}
\int_{{{}\Phi}_{small}}e_{V_\beta^A}(t)\chi_{B_0}(X_t) P_x(dX)\le P_x(\Phi_{small})  \le C H^{\kappa''}\delta^{\theta} \le 
CA^{-2v(\gamma -\kappa'')}.
\end{equation}

Next,  consider the following sets 
\begin{eqnarray*}
\leftline{$ {{}\Phi}_{j_1, \ldots, j_{H_0}, k_1, \ldots, k_{H_0}, 0 }^{bad}=  {{}\Phi}_{j_1, \ldots, j_{H_0}, k_1, \ldots, k_{H_0}, 0 }$} \\ \bigcap \left\{ \int _{\tau_{j_s}}^{\tau_{j_s}+\gamma_{\delta,H} } V_\beta^A(X_s)ds  \le  \zeta E\left( \int_{\tau_{j_s}}^{\tau_{j_s}+\gamma_{\delta,H} } V_\beta^A(X_s)\right) ds,   \, \mbox{on every $I_{k_s}$}\right\}\,;
\end{eqnarray*}
$${{}\Phi}_{essential} =\bigcup_{H_0> H^{\kappa'}} \quad  \bigcup_{j_1, \ldots, j_{H_0}, k_1, \ldots, k_{H_0}} {{}\Phi}_{j_1, \ldots, j_{H_0}, k_1, \ldots, k_{H_0}, 0 }\,;$$ 
$${{}\Phi}_{bad} =\bigcup_{H_0> H^{\kappa'}} \quad  \bigcup_{j_1, \ldots, j_{H_0}, k_1, \ldots, k_{H_0}}{{}\Phi}_{j_1, \ldots, j_{H_0}, k_1, \ldots, k_{H_0}, 0 }^{bad}\,;$$
$${{}\Phi}_{good} ={{}\Phi}_{essential}\setminus{{}\Phi}_{bad}.$$
 Note that $\gamma_{\delta,H}=\delta^2(\log{H})^{-3}\approx A^{-2v}(\log{H})^{-3} $. 
Let $2vd<\beta +\alpha -d$.  
If $X\in {{}\Phi}_{good}$ then, for at least one $j_s$ we have 
\begin{eqnarray*}
 \int_{\tau_{j_s}}^{\tau_{j_s}+\gamma_{\delta,H} } V_\beta^A(X_s)ds  \ge \zeta E_x\left( \int_{\tau_{j_s}}^{\tau_{j_s}+\gamma_{\delta,H}} V_\beta^A(X_s)ds\right) \ge C A^{\beta+\alpha-d}\frac{\delta^{2d}}{(\log H)^{3d}}
 \\ \ge C\frac{A^{\beta+\alpha-d-2dv}}{(\log H)^{3d}}\ge C A^{\iota}
\end{eqnarray*}
for any $\iota<\beta+\alpha-d-2dv$ and  for some constant  $C>0$ 
depending only on the domain and $\gamma$, but not on $A$. 
Hence, for any fixed $\rho>0$ and sufficiently large $A$ we have 
\begin{equation}\label{gq}
\int_{{{}\Phi}_{good}}e_{V_\beta^A}(t)\chi_{B_0}(X_t) P_x(dX)\le \exp(-CA^\iota)    \le C' A^{-\rho}.
\end{equation}
Next we shall  prove that for any $\rho>0$ we have  $P({{}\Phi}_{bad})\le CA^{-\rho}$.
To that end we  will show that 
\begin{equation}\label{tilde}
P_x\left({{{}\Phi}}^{bad}_{j_1, \ldots, j_{H_0}, k_1, \ldots, k_{H_0}}\right)\le {\xi}^{H_0}P_x\Big({{}\Phi}_{j_1, \ldots, j_{H_0}, k_1, \ldots, k_{H_0}}\Big),
\end{equation}
where ${\xi}$ is the constant from Lemma \ref{l9}.

	We fix some notation. For $\epsilon \in \{0,1 \}$ and for $s \in {1,\ldots,H_0}$ we denote $p_{k_s}^{\epsilon}(x,y)$
simply setting     $p_{k_s}^{0}(x,y)=p(x,y)$ and $p_{k_s}^{1}(x,y)=\tilde{p}(x,y)$
where $p$ and $\tilde{p}$ are defined by \eqref{p} and \eqref{pt}.
 Secondly,  for $h \in \{1,\ldots, H\} \setminus  \{k_1,\ldots,k_{H_0}\}$ we denote
$p_{h}^{0}(x,y)$ as the  Brownian bridge measure $d\mu_{x,y}$ of 
\begin{eqnarray}
p_{h}^{0}(x,y)=\mu_{x,y} \Big(X\in \Omega_{x,y}^{2\delta^2} \colon \,  X_t \notin  K \quad \forall{t\in[0,\delta^2]}  \, \Big)
 \nonumber
\end{eqnarray}

 We put $\epsilon_h=1$ if $h \in \{k_1,\ldots,k_{H_0}\}$  $\epsilon_h=0$  for $h \in \{1,\ldots, H\} \setminus  \{k_1,\ldots,k_{H_0}\}$.  Now, by Lemma  \ref{l9}
	$p_h^1(x,y) \le {\xi} p_k^0(x,y)$
	for $h \in \{k_1,\ldots,k_{H_0}\}$ so by the 
	  Markov property: 

\begin{eqnarray*}
P_x\Big({{{}\Phi}}^{bad}_{j_1, \ldots, j_{H_0}, k_1, \ldots, k_{H_0}}\Big)=\\
\int_{B_0}\int p_{0}^{\epsilon_0}(x,x_1)  p_{1}^{\epsilon_1}(x_{1},x_{2})
 \ldots  p_{H-1}^{\epsilon_{H-1}}(x_{H-1},y)dx_1\ldots dx_{H-1}dy \le 
 \\ 
 \le {\xi}^{H_0} 
\int_{B_0} \int p_{0}^{0}(x,x_1)  p_{1}^{0}(x_{1},x_{2})
 \ldots  p_{H-1}^{0}(x_{H-1},y)dx_1\ldots dx_{H-1}dy \\
 ={\xi}^{H_0}P_x\Big({{}\Phi}_{j_1, \ldots, j_{H_0}, k_1, \ldots, k_{H_0}}\Big).
\end{eqnarray*}
 The above inequality  proves \eqref{tilde}.
 Now, since the sets ${{{}\Phi}}^{}_{j_1, \ldots, j_{H_0}, k_1, \ldots, k_{H_0}}$ are disjoint, 
  for  $\delta \approx A^{-v}$ and any $C>0$ we have
 \begin{eqnarray}
 P_x\Big({{{}\Phi}}_{bad}\Big)\le
\sum_{H_0\ge H^{\kappa'}}\sum_{j_1, \ldots, j_{H_0}, k_1, \ldots, k_{H_0}}P_x\Big({{{}\Phi}}^{bad}_{j_1, \ldots, j_{H_0}, k_1, \ldots, k_{H_0}}\Big)\le \label{lab_bed_2}\\
\le C {\xi}^{H^{\kappa'}}\sum_{H_0\ge H^{\kappa'}}\sum_{j_1, \ldots, j_{H_0}, k_1, \ldots, k_{H_0}}P_x\Big({{{}\Phi}}^{}_{j_1, \ldots, j_{H_0}, k_1, \ldots, k_{H_0}}\Big)  \le C_{v, C} A^{-\rho}\nonumber
\end{eqnarray}
for sufficiently large $H$. 
Combining the  estimates \eqref{small},   \eqref{gq} and    \eqref{lab_bed_2}  we get that if  $0<\beta+\alpha-d-2dv$ then for 
any $\rho < \gamma \frac{\beta+\alpha -d}{d}$ and $\kappa'',\kappa'$ small enough 
\begin{eqnarray*}
 \int_{\Phi_{small}\cup \Phi_{good} \cup \Phi_{bad}   } e_{V_\beta^A}(t)\chi_{B_0}(X_t) P_x(dX)  
{}\hspace{3cm}
\\ \le 
CA^{-2v(\gamma -\kappa'')}+CA^{-\rho}+ P_x\Big({{{}\Phi}}_{bad}\Big) \le CA^{-v\gamma}.
\end{eqnarray*}
This ends the proof.  

\hfill $\Box$

\bigskip

\noindent
{\bf Acknowledgements:}  Adam Sikora was partially supported by an 
Australian Research Council (ARC) Discovery Grants  DP130101302 and DP160100941.
Jacek Zienkiewicz  was partially supported by  NCN
grant   UMO-2014/15/B/ST1/00060.  We would like to thank the anonymous referee for carefully reading our manuscript and
making several valuable comments and suggestions.

\end{document}